\newtheorem{thm1}{Theorem}[section]
\newtheorem{theorem}[thm1]{Theorem}
\newtheorem{lemma}[thm1]{Lemma}
\newtheorem{corollary}[thm1]{Corollary}
\newtheorem{proposition}[thm1]{Proposition}
\newtheorem{thmx}{Theorem}
\renewcommand{\thethmx}{\Alph{thmx}}
\theoremstyle{definition}
\newtheorem{definition}[thm1]{Definition}
\newcounter{exampleendflag}
\newcommand{\exendhere}{
  \setcounter{exampleendflag}{0} 
  \ifmmode
    \eqno
    \ensuremath{\blacktriangle}
  \else
    \hspace{\stretch{1}}
    \ensuremath{\blacktriangle}
  \fi
}
\newenvironment{ex}{
  \setcounter{exampleendflag}{1}
  \begin{exx}
}{
  \ifthenelse{\value{exampleendflag}=1}{\exendhere}{} 
  \end{exx}
} 
\newtheorem{exx}[thm1]{Example}
\theoremstyle{remark}
\newtheorem{remark}[thm1]{Remark}
\title{Good Hilbert functors}
\author{\gss}
\thanks{The author is supported by the Swedish Research Council, grant number 2011-5599.}
\subjclass[2010]{Primary 14C05, Secondary  14D23, 14A20}
\keywords{Hilbert schemes, invariant Hilbert schemes, multigraded Hilbert schemes, good moduli spaces, formal GAGA, Artin's criterion}
\newcommand{\saeden}{S\ae d\'en}
\newcommand{\stahl}{St\aa hl}
\newcommand{\gss}{Gustav \saeden\ \stahl}
\newcommand{\fF}{\mathcal{F}}
\newcommand{\ffF}{\mathfrak{F}}
\newcommand{\fG}{\mathcal{G}}
\newcommand{\fH}{\mathcal{H}}
\newcommand{\spec}{\operatorname{Spec}}
\newcommand{\coker}{\operatorname{coker}}
\renewcommand{\hom}{\operatorname{Hom}}
\newcommand{\ext}{\operatorname{Ext}}
\newcommand{\Coh}{\mathbf{Coh}}
\newcommand{\Cohpgs}{\mathbf{Coh}^{\operatorname{pgs}}}
\newcommand{\Cohps}{\mathbf{Coh}^{\operatorname{ps}}}
\newcommand{\QCoh}{\mathbf{QCoh}}
\renewcommand{\O}{\mathcal{O}}
\newcommand{\m}{\mathfrak{m}}
\newcommand{\Hilb}{\mathcal{H}\!\mathit{ilb}}
\newcommand{\GoodHilb}{\mathcal{H}\!\mathit{ilb}^{\operatorname{good}}}
\newcommand{\GoodHilbh}{\mathcal{H}\!\mathit{ilb}^{\operatorname{good},h}}
\newcommand{\shom}{\mathscr{H}\!\mathit{om}}
\newcommand{\Ann}{\mathscr{A}\!\mathit{nn}}
\newcommand{\End}{\mathscr{E}\!\mathit{nd}}
\newcommand{\morj}{\operatorname{Mor}^{\operatorname{flb},\operatorname{lfpb}}}
\renewcommand{\epsilon}{\varepsilon}
\newcommand{\supp}{\operatorname{Supp}}
\newcommand{\suppp}{\operatorname{Supp}^{\operatorname{good}}}
\newcommand{\too}{\hookrightarrow}
\renewcommand\O{\mathcal{O}}
\newcommand{\E}{\mathcal{E}}
\newcommand{\N}{\mathbb{N}}
\newcommand{\C}{\mathscr{C}}
\newcommand{\X}{\mathscr{X}}
\newcommand{\Y}{\mathscr{Y}}
\newcommand{\Z}{\mathscr{Z}}
\newcommand{\I}{\mathcal{I}}
\newcommand{\sch}{\mathbf{Sch}}
\newcommand{\set}{\mathbf{Set}}
\renewcommand{\m}{\mathfrak{m}}
\newcommand{\grass}{\mathcal{G}\!{\kern 0.0667em}\textit{rass}}
\newcommand{\qq}[1]{``#1''}
\newcommand{\git}{\kern-0.45em\mathbin{
  \mathchoice{/\mkern-6mu/}
    {/\mkern-6mu/}
    {/\mkern-5mu/}
    {/\mkern-5mu/}\kern-0.3em}}
\newcommand{\extp}{\@ifnextchar^\@extp{\@extp^{\,}}}
\def\@extp^#1{\mathop{\bigwedge\nolimits^{\!#1}}}
\numberwithin{equation}{section}
\address{Department of Mathematics, KTH Royal Institute of Technology, SE-100 44 Stockholm, Sweden}
\email{gss@math.kth.se}
\begin{document}
\begin{abstract}
We introduce the good Hilbert functor and prove that it is algebraic. This functor generalizes various versions of the Hilbert moduli problem, such as the multigraded Hilbert scheme and the invariant Hilbert scheme. Moreover, we generalize a result concerning formal GAGA for good moduli spaces.
\end{abstract}
\maketitle

\section{Introduction}
In this paper we introduce the following variant of the Hilbert moduli problem. Let $\X$ be an algebraic stack over a scheme $S$, and suppose that $\X$ admits a good moduli space $\phi\colon\X\to X$. The \emph{good Hilbert functor} is the functor $\GoodHilb_\X\colon\sch_S^{\operatorname{op}}\to\set$ that sends an $S$-scheme $T$ to the set of closed substacks $\Z\subseteq\X_T$ that have proper good moduli spaces. That is, we consider the set of closed substacks $\Z$  fitting into the the following commutative diagram 
\[\xymatrix{\Z\ar@{^(->}[r]\ar[d]&\X_T\ar[d]^{\phi_T}\\Z\ar@{^(->}[r]\ar[dr]&X_T\ar[d]\\&T}\]
where 
\begin{itemize}
\item $\Z\to T$ is flat and finitely presented, and 
\item $Z\to T$ is proper, where $Z:=\phi_T(\Z)$ is the good moduli space of $\Z$. 
\end{itemize}
The main result of this paper is then the following.
\begin{thmx}\label{thm:main}
Let $S$ be a scheme of finite type over a field $k$, and let $\X$ be an algebraic stack of finite type over $S$. Suppose that $\X$ has affine diagonal, has the resolution property, and admits a good moduli space $\X\to X$, such that $X\to S$ is separated. Then, the functor $\GoodHilb_\X$ is an algebraic space that is locally of finite presentation over $S$.
\end{thmx}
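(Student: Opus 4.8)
The plan is to verify Artin's criterion for algebraicity. Since a closed substack is a subobject of $\X_T$ and carries no nontrivial automorphisms, $\GoodHilb_\X$ is a functor to sets, so it suffices to exhibit it as an fppf sheaf satisfying Artin's axioms, and the outcome will then be an algebraic space rather than a stack. First I would check the sheaf condition: closed immersions, flatness and finite presentation all descend along fppf covers, and the formation of the good moduli space $Z:=\phi_T(\Z)$ commutes with flat base change on $T$ (Alper), so properness of $Z\to T$ descends as well; since $Z$ is determined by $\Z$, the conditions defining $\GoodHilb_\X(T)$ are fppf-local on $T$. That $\GoodHilb_\X$ is limit-preserving follows from standard noetherian approximation: as $\X$ is of finite type over $S$, a closed substack of $\X_{\varinjlim A_i}$ descends to a finite stage, and flatness, finite presentation, properness and the formation of good moduli spaces are all compatible with filtered colimits of rings; hence $\GoodHilb_\X$ is locally of finite presentation over $S$.

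Next I would set up the deformation–obstruction theory. Fix $[\Z]\in\GoodHilb_\X(A_0)$ with ideal $\I\subseteq\O_{\X_{A_0}}$, and a square-zero extension $A_0\to A$ with kernel a finite $A_0$-module $M$. As in the classical Hilbert-scheme case, the set of deformations of $\Z$ inside $\X_A$ is either empty or a torsor under $\hom_{\O_\Z}(\I/\I^2,\O_\Z\otimes_{A_0}M)$, with a well-defined obstruction in $\ext^1_{\O_\Z}(\I/\I^2,\O_\Z\otimes_{A_0}M)$; homogeneity in Schlessinger's sense holds because the analogous statement holds for ideals, flatness and the good moduli space construction under the relevant surjections. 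The essential finiteness input — needed for Artin's criterion — is that these $\ext$-groups are finite $A_0$-modules. Here the hypotheses enter: the resolution property on $\X_{A_0}$ furnishes a resolution of $\I/\I^2$ by vector bundles, cohomological affineness of $\phi_{A_0}\colon\X_{A_0}\to X_{A_0}$ (together with affine diagonal) collapses the local $\ext$-computation onto the good moduli space of $\Z$, and properness of $Z\to\spec A_0$ makes the resulting cohomology modules coherent. The same machinery, applied with coefficients in an arbitrary finite module, yields an obstruction theory with the correct base-change behaviour.

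The main obstacle is effectivity of formal deformations. Given a complete noetherian local $S$-algebra $A$ with maximal ideal $\m$ and a compatible system of closed substacks $\Z_n\subseteq\X_{A_n}$, $A_n=A/\m^{n+1}$, each flat over $A_n$ with proper good moduli space $Z_n\to\spec A_n$, one must produce a closed substack $\Z\subseteq\X_A$ restricting to the $\Z_n$. The $Z_n$ assemble into a proper formal scheme over $\spec A$, and the generalized formal GAGA for good moduli spaces proved in this paper then algebraizes the formal coherent sheaf $\{\O_{\Z_n}\}$ on the completion of $\X_A$ along its closed fibre to a coherent quotient of $\O_{\X_A}$, i.e.\ to a closed substack $\Z\subseteq\X_A$. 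Flatness of $\Z\to\spec A$ follows from the local criterion of flatness applied to the $\Z_n$, and properness of the good moduli space of $\Z$ over $\spec A$ follows because its formal completion recovers the proper formal scheme $\{Z_n\}$, using that good moduli spaces are compatible with this completion. With effectivity established, Artin's criterion (with Artin approximation) gives that $\GoodHilb_\X$ is an algebraic space, locally of finite presentation over $S$.

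I expect the formal GAGA step to be the most delicate point — both proving the algebraization statement in sufficient generality and checking that the algebraized ideal sheaf actually defines a substack that is flat with \emph{proper} good moduli space, rather than merely recovering the $\Z_n$ formally. The $\ext$-finiteness is a secondary subtlety: because $\X$ itself is not proper, one cannot compute cohomology on $\X$ and must genuinely descend to the proper good moduli space of $\Z$, which is exactly where the resolution property and affine diagonal are used. The verification of homogeneity and of the remaining bookkeeping axioms of Artin's criterion I expect to be routine given these two inputs.
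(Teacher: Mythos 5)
Your proposal follows the same skeleton as the paper's proof: verify Artin's criterion, with the sheaf and limit-preservation axioms handled by descent and noetherian approximation, and with effectivity --- correctly identified as the crux --- supplied by the formal GAGA theorem for good moduli spaces (Theorem~\ref{thm:cohpgs} applied to quotients of $\O_{\X_A}$; note that properness of the good moduli space of the algebraized substack comes essentially for free, since that equivalence is between categories of sheaves with \emph{proper good support}). The one substantive divergence is the deformation--obstruction step. You compute deformations and obstructions as $\hom$ and $\ext^1$ of $\I/\I^2$ and argue that these are finite modules via cohomological affineness of $\phi$, the resolution property, and properness of $Z\to\spec(A_0)$. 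That finiteness argument is plausible, but it is not what the version of Artin's criterion used here (Theorem~\ref{thm:artin}) asks for: one needs the deformation and obstruction functors $\QCoh(T)\to\mathbf{Ab}$ to be \emph{coherent functors}, a condition strictly stronger than taking finitely generated values, and the paper obtains it by a different mechanism --- the inclusion $\GoodHilb_\X\to\morj_\X$ is formally \'etale (Lemma~\ref{lem:fixarallt}), so the theories are the functors $\ext^n_{\O_\Z}\bigl(\fF,g^\ast f_T^\ast(-)\bigr)$ of Proposition~\ref{prop:cohoftheo}, and their coherence follows from compact generation of $\operatorname{D}_{\QCoh}(\X)$ (available because closed points of $\X$ have linearly reductive stabilizers) together with Hall's coherence theorem. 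If you instead intend to run Artin's original criterion with finiteness plus base-change compatibility, you would need to verify the full list of compatibilities (\'etale localization and completion of the deformation and obstruction modules), which your sketch does not address; this is the only genuine gap. Everything else --- including the homogeneity bookkeeping, which you phrase in Schlessinger's language rather than via the bootstrap through $\morj_\X$, and the observation that flatness of the algebraized $\Z$ follows from the local criterion --- matches the paper's argument in substance.
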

The assumptions of the theorem implies that $\X$ is quasi-compact, quasi-separated, and noetherian. A noetherian algebraic stack $\X$ has the \emph{resolution property} if every coherent sheaf $\fF$ on $\X$ has a surjection $\E\twoheadrightarrow\fF$ from a locally free sheaf~$\E$ \cite{MR2108211}. 

\noindent\textbf{Background.} The Hilbert moduli problem seeks to parametrize all closed subschemes of projective space. More generally, given an algebraic stack $\X$ over a scheme $S$, the Hilbert functor $\Hilb_\X\colon\sch_S^{\operatorname{op}}\to\set$ sends an $S$-scheme $T$ to the set of  closed substacks $\Z\subseteq\X_T$ such that the composition $\Z\too\X_T\to T$ is flat, finitely presented, and proper.
The Hilbert functor is an algebraic space when $\X\to S$ is separated and locally of finite presentation \cite{MR2183251}. On the other hand, the Hilbert functor is never algebraic if $\X\to S$ is non-seperated \cite{MR2369042}. See also \cite{MR3148551,MR3359029}. The reason that the algebraicity fails is that formal GAGA does not hold when $\X\to S$ is non-separated. In fact, Grothendieck's existence theorem says that formal GAGA holds for separated morphisms $\X\to\spec(A)$ of finite~type. 

When $\X=U$ is a scheme, many variations of the Hilbert functor have been studied in great detail. In particular, when $U=\spec(R)$ is affine we have the multigraded Hilbert scheme \cite{MR2073194} and the invariant Hilbert scheme \cite{MR2092127}, \cite{MR3184162}. The multigraded case studies closed subschemes of an affine space whose defining ideals have a given Hilbert function with respect to a grading by an abelian group. The invariant Hilbert scheme is a generalization of this and parametrizes closed subschemes that are invariant under an action of a linearly reductive group $G$. By considering the quotient stack of $U$ by~$G$, we have that every $G$-invariant closed subscheme $V\subseteq U$ corresponds to a closed substack $\Z\subseteq[U/G]$. In fact, there is a cartesian square
\[\xymatrix{V\ar[d]\ar[r]&U\ar[d]\\\Z\ar[r]&[U/G]}\]
and $\Z=[V/G]$. 
 Thus, parametrizing invariant closed subschemes $V$ is equivalent with parametrizing closed substacks $\Z\subseteq[U/G]$.
However, when the group $G$ is non-proper, then ${[U/G]\to\spec(A)}$ is non-separated, so the Hilbert functor $\Hilb_{[U/G]}$ is not algebraic. What will save us is that $[U/G]$ admits a good moduli space, in the form of the GIT quotient $[U/G]\to {U\git G}$, which is separated over $\spec(A)$. Good moduli space morphisms have many properties similar to those of proper morphisms, and in \cite{MR3350157} the authors showed that formal GAGA holds, under certain conditions, when $\X\to\spec(A)$ is a good moduli space. That setting is not sufficiently general for us, but we will generalize their result.
\\

\noindent\textbf{This paper.} We introduce the good Hilbert functor which is similar to the classical one. Indeed, there is an inclusion $\Hilb_\X\subseteq\GoodHilb_\X$ of functors, with equality if $\X\to S$ is proper.



The classical Hilbert functor is not algebraic when $\X\to S$ is non-separated, but in Theorem~\ref{thm:main} we show instead that the good Hilbert functor is algebraic under some other assumptions on $\X$. The main problem needed to be worked out in order to show this is a version of formal GAGA for algebraic stacks that admit good moduli spaces. 
For instance, we show the following.
\begin{thmx}\label{thm:coh}
Let $A$ be a complete local noetherian ring, and let $\X$ be a noetherian algebraic stack over $\spec(A)$. Suppose also that $\X$ has the resolution property and that $\X$ admits a good moduli space $X$ that is proper over $\spec(A)$. Then, the completion functor ${\Coh(\X)\to\Coh(\widehat{\X})}$ is an equivalence of categories.
\end{thmx}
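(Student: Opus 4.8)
This is a formal GAGA (Grothendieck existence) statement, and the plan is to bootstrap it from two ingredients: Grothendieck's existence theorem for the proper scheme $X$ over the complete local noetherian ring $A$, and the fact that the good moduli space morphism $\phi\colon\X\to X$ is cohomologically affine with $\phi_*\O_\X=\O_X$. Set $\X_n=\X\times_{\spec A}\spec(A/\m^{n+1})$ with closed fibre $\X_0$, residue field $\kappa=A/\m$, and write $\widehat X$ for the formal completion of $X$ and $\widehat\X$ for that of $\X$; then $\widehat X\to X$ is flat, $\widehat\X=\X\times_X\widehat X$, the base change $\phi_0\colon\X_0\to X_0$ is again a good moduli space with $X_0$ proper over $\kappa$, and any closed substack of $\X$ — in particular $\X_0$ — again has the resolution property.

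\emph{Full faithfulness.} Replacing a pair of coherent sheaves by the sheaf $\shom_{\O_\X}(\fF,\fG)$ reduces this to showing that $H^0(\X,\fF)\to H^0(\widehat\X,\widehat\fF)$ is bijective for every coherent $\fF$. Because $\phi$ is cohomologically affine and $\phi_*$ preserves coherence, $H^0(\X,\fF)=H^0(X,\phi_*\fF)$ with $\phi_*\fF$ coherent; the vanishing of the higher direct images together with flat base change along $\widehat X\to X$ gives $\widehat\phi_*\widehat\fF=\widehat{\phi_*\fF}$, so $H^0(\widehat\X,\widehat\fF)=H^0(\widehat X,\widehat{\phi_*\fF})$. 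The theorem on formal functions for the proper morphism $X\to\spec A$ (with $A$ complete) now identifies the two sides.

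\emph{Essential surjectivity.} Full faithfulness and exactness of completion make the essential image a thick abelian subcategory of $\Coh(\widehat\X)$, closed under extensions, kernels and cokernels; so it is enough to present any $\widehat\fF\in\Coh(\widehat\X)$ as the cokernel of a map $\widehat\fE_1\to\widehat\fE_0$ between completions of vector bundles on $\X$ (the map itself algebraizes by full faithfulness). Thus everything reduces to: every coherent sheaf on $\widehat\X$ is a quotient of $\widehat\fE$ for some vector bundle $\fE$ on $\X$. By Nakayama for the $\m$-adically complete stack $\widehat\X$ it suffices to produce $\fE$ and a morphism $\widehat\fE\to\widehat\fF$ whose restriction to $\X_0$ is surjective; using the resolution property of $\X$ (applied to the pushforward to $\X$ of $\widehat\fF|_{\X_0}$) one gets a vector bundle $\fE$ on $\X$ with a surjection $\fE|_{\X_0}\twoheadrightarrow\widehat\fF|_{\X_0}$, and the remaining task is to lift such a surjection up the infinitesimal tower. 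The obstruction at level $n$ lies in $H^1\!\bigl(\X_0,\fE^\vee|_{\X_0}\otimes\m^{n}\widehat\fF/\m^{n+1}\widehat\fF\bigr)$, which by cohomological affineness of $\phi_0$ is the $H^1$ of a coherent sheaf on the proper $\kappa$-scheme $X_0$; since $\bigoplus_n\m^{n}\widehat\fF/\m^{n+1}\widehat\fF$ is a single coherent sheaf on $\X_0\times_\kappa\spec(\operatorname{gr}_\m A)$, whose good moduli space is proper over $\spec(\operatorname{gr}_\m A)$, a Serre-type vanishing — after reducing, via Chow's lemma and cohomological descent along a proper cover, to the case where $X$ is projective over $\spec A$ — should kill all these obstructions simultaneously.

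The hard part is this last point. On $\X$ there is no ample line bundle, so the vector bundle $\fE$ used to annihilate $H^1$ must be manufactured from the resolution property — equivalently, from the presentation $\X\cong[U/GL_n]$ with $U$ quasi-affine that it provides — and it must simultaneously be large enough to surject onto $\widehat\fF|_{\X_0}$ and positive enough, against $\phi^*$ of an ample bundle on $X$, to kill $H^1$ against the whole associated graded of $\widehat\fF$ uniformly in $n$; reconciling these two demands is the technical core, and is exactly the point where the good-moduli-space formal GAGA of \cite{MR3350157} has to be extended.
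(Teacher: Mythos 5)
Your full-faithfulness argument is essentially the paper's (Proposition~\ref{prop:gmsff2}): reduce to global sections of $\shom_{\O_\X}(\fF,\fG)$, push down to $X$ using cohomological affineness and the projection formula to commute $\phi_*$ with completion, and invoke Grothendieck existence on the proper algebraic space $X$. The reduction of essential surjectivity to producing, for each $\ffF\in\Coh(\widehat\X)$, a surjection onto $\ffF$ from the completion of an algebraizable sheaf is also the paper's reduction. The gap is in how you try to produce that surjection, and you have correctly located it yourself: you start from a surjection $\E|_{\X_0}\twoheadrightarrow\ffF|_{\X_0}$ with $\E$ a vector bundle on $\X$ and attempt to lift it up the infinitesimal tower, which meets obstructions in $H^1\bigl(X_0,\phi_{0*}(\E^\vee|_{\X_0}\otimes\m^n\ffF/\m^{n+1}\ffF)\bigr)$. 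When $X=\spec(A)$ (the case of \cite{MR3350157}) these groups vanish because $X_0$ is affine; when $X$ is merely proper they do not, and the Serre-vanishing/Chow's-lemma strategy you sketch is not carried out and is not obviously repairable: twisting $\E$ by a negative power of $\phi^*L$ to gain positivity in $\E^\vee$ destroys the surjectivity onto $\ffF|_{\X_0}$, which is exactly the tension you acknowledge. As written, the essential surjectivity step is incomplete.

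The paper's proof of Theorem~\ref{thm:cohpgs} avoids the lifting problem entirely, and this is the idea you are missing. Given the compatible system $(\fF_n)$ and a surjection $\E\twoheadrightarrow j_*\fF_0$ from the resolution property, one does not lift this map; instead one forms the compatible system $\bigl(\phi_*\shom(\E_n,\fF_n)\bigr)$ on $(X_n)$, algebraizes it on the separated (here proper) algebraic space $X$ by Knutson's theorem to get $G\in\Cohps(X)$, sets $\fG=\phi^*G$, and uses the \emph{canonical} evaluation maps
\[
\E_n\otimes\fG_n=\E_n\otimes\phi^*\phi_*\shom(\E_n,\fF_n)\longrightarrow\fF_n,
\]
which exist at every level $n$ by adjunction and are automatically compatible --- there is nothing to lift and no obstruction group ever appears. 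Surjectivity is checked only at level $0$, where the chosen surjection $s\colon\E_0\to\fF_0$ is a global section of $\fG_0$ witnessing that the evaluation map hits everything, and Nakayama propagates surjectivity to all $n$. The price is that the covering object is $\E\otimes\phi^*G$ rather than a vector bundle, but that is harmless for the cokernel-presentation argument, which you set up correctly. So the skeleton of your proof is right, but the single step you flag as ``the technical core'' is resolved by this twist-by-$\phi^*G$ device rather than by any positivity or vanishing argument.
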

This result is not sufficiently general for what we require, but in Theorem~\ref{thm:cohpgs} we prove a stronger version of this theorem, where $X\to\spec(A)$ is only separated and of finite type, and we show that there is an equivalence between the coherent sheaves that have supports admitting good moduli spaces that are proper over $\spec(A)$.

Returning to the invariant Hilbert scheme, we consider the quotient stack $\X=[U/G]$, where $U=\spec(A)$ and $G$ is a linearly reductive group. We then recover the invariant Hilbert scheme as part of a stratification of $\GoodHilb_{[U/G]}$. This is explained in Section~\ref{sec:invHilb}.
\\

\noindent\textbf{Acknowledgements.}
I am indebted to David Rydh for all the invaluable discussions and helpful feedback while writing this paper. Moreover, I thank Jack Hall for his many insightful comments. 

\section*{Conventions and notation}
We will always assume that the algebraic stack $\X$ is quasi-compact and quasi-separated. 
Given an $S$-scheme $T$, we write $\X_T=\X\times_ST$ for the base change. If $T=\spec(A)$ is affine, then we also write $\X_A=\X_T$. Given two sheaves $\fF$ and $\fG$, we let $\hom(\fF,\fG)$ denote the set of morphisms from $\fF$ to $\fG$, and we let $\shom(\fF,\fG)$ denote the sheaf $T\mapsto\hom(\fF|_T,\fG|_T)$. 

The paper \cite{MR3237451} follows the conventions of \cite{MR1771927} and assumes that all stacks have quasi-compact and \emph{separated} diagonal. As all our results will build on this paper, we will make the same assumptions. However, the results of \cite{MR3237451} should remain true without the separatedness assumption, and the same would then hold here. Regardless, many results in Section~\ref{sec:main}, in particular Theorem~\ref{thm:main}, will assume that the stack has even affine diagonal. 

\section{Good moduli spaces}\label{sec:gms}
Good moduli spaces were introduced by Alper in \cite{MR3237451}. A good moduli space of an algebraic stack $\X$ over a scheme $S$ is a quasi-compact and quasi-separated morphism $\phi\colon\X\to X$, where $X$ is an algebraic space over $S$, such that
\begin{enumerate}
\item the natural map $\O_X\to\phi_\ast\O_\X$ is an isomorphism, and
\item the functor $\phi_\ast\colon\QCoh(\X)\to\QCoh(X)$ on quasi-coherent sheaves is exact.
\end{enumerate}
Good moduli spaces are generalizations of GIT-quotients as explained in op.\,cit. Moreover, even though good moduli space morphisms are generally non-separated, they have many properties similar to those of proper morphisms. For us, they are important since they behave well with respect to formal GAGA, as will be explained in Sections~\ref{sec:formalgaga1} and~\ref{sec:formalgaga2}. 

Assuming that $\X$ and $S$ are noetherian we 
 here list a few results concerning the good moduli space of $\X$ that will be used throughout the text. The references to these results are all, except number (7), with respect to op.\,cit. 
\begin{enumerate}
\item 
The good moduli space $X$ is unique up to canonical isomorphism [Theorem~6.6].
\item The map $\phi$ is surjective and universally closed [Theorem~4.16(i+ii)].
\item If $T$ is an $S$-scheme, then the base-change $\phi_T\colon \X_T\to X_T$ is a good moduli space [Proposition~4.7(i)].
\item If $\Z$ is a closed substack of $\X$, then $\Z\to Z:=\phi(\Z)$ is a good moduli space [Lemma~4.14].
\item 
The push-forward functor $\phi_\ast$ takes coherent sheaves on $\X$ to coherent sheaves on~$X$ [Theorem~4.16(x)].
\item The projection morphism $\phi_\ast\fF\otimes\fG\to \phi_\ast(\fF\otimes\phi^\ast\fG)$ is an isomorphism for all quasi-coherent sheaves $\fF\in\QCoh(\X)$ and $\fG\in\QCoh(X)$ [Proposition~4.5].
\item If $\X\to S$ is of finite type, then $X\to S$ is of finite type \cite[Theorem~6.3.3]{MR3272912}.
\end{enumerate}

\section{Artin's criteria for algebraicity}\label{sec:artin}
In \cite{MR0260746}, Artin gave criteria for when a functor $\sch_S^{\operatorname{op}}\to\set$ is an algebraic space. He later extended these results in \cite{MR0399094} to give a criterion for when a category fibered in groupoids over $\sch_S$ is an algebraic stack. This criterion was studied in \cite{2012arXiv1206.4182H}, where a more streamlined version was presented. We state this criterion for the sake of completeness. 
\begin{theorem}[{\cite[Theorem~A]{2012arXiv1206.4182H}}]\label{thm:artin}
Let $\C$ be a category fibered in groupoids over $\sch_S$, where $S$ is excellent (see Remark~\ref{rem:exc}). 
Then, $\C$ is an algebraic stack, locally of finite~presentation over $S$, if and only if the following conditions are satisfied.
\begin{enumerate}
\item \emph{[Stack]} $\C$ is a stack.
\item \emph{[Limit preservation]} For any inverse system of affine $S$-schemes $\{\spec A_j\}_{j\in J}$ with $\varprojlim\kern-0.15em{}_j\kern0.0em\spec(A_j)=\spec(A)$, the natural functor
\[{\textstyle\varinjlim\kern-0.1em{}_j\kern0.1em}\C(\spec A_j)\to\C(\spec A)\]
is an equivalence of categories.
\item \emph{[Homogeneity]} For any diagram of affine $S$-schemes
\[\xymatrix{\spec(B)&\ar[l]\spec(A)\ar[r]^-i&\spec(A'),}\]
where $i$ is a nilpotent closed immersion, the natural functor
\[\C\bigl(\spec(B\times_AA')\bigr)\to \C(\spec B) \times_{\C(\spec A)}\C(\spec A')\]
is an equivalence of categories.
\item \emph{[Effectivity]} For any complete noetherian local ring $(B,\m)$ with an $S$-scheme structure $\spec(B)\to S$, such that the induced morphism $\spec(B/\m)\to S$ is of finite type, the natural functor
\[\C(\spec B)\to\varprojlim\kern-0.2em{}_n\kern0.0em\C(\spec B/\m^n)\]
is an equivalence of categories.
\item \emph{[Conditions on automorphisms and deformations]} For any affine $S$-scheme $T$ that is of finite type over $S$, and $\xi\in\C(T)$, the functors $\operatorname{Aut}_{\C/S}(\xi,-)$ and $\operatorname{Def}_{\C/S}(\xi,-)$ from $\mathbf{QCoh}(T)$ to $\mathbf{Ab}$ are coherent.
\item \emph{[Conditions on obstructions]} For any affine $S$-scheme $T$ that is of finite type over~$S$, and $\xi\in\C(T)$, there exists an integer $n$ and a coherent $n$-step obstruction theory for~$\C$ at $\xi$. 
\end{enumerate}
\end{theorem}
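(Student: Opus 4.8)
The plan is to prove both implications, with essentially all of the content lying in the direction that the six conditions imply algebraicity; the converse is a matter of assembling standard structural facts. For the \emph{easy direction}, suppose $\C$ is an algebraic stack locally of finite presentation over $S$. Then [Stack] is immediate and [Limit preservation] is precisely the assertion that $\C$ is locally of finite presentation. [Homogeneity] is the Schlessinger--Rim gluing property, which holds for any algebraic stack because $\C$ and its diagonal are locally of finite presentation and deformations along the nilpotent extension $\spec(A)\to\spec(A')$ are governed by the cotangent complex. [Effectivity] is the stacky form of Grothendieck's existence theorem. Finally, the cotangent complex $L_{\C/S}$ exists and is suitably bounded; the automorphism, deformation, and obstruction modules are computed from its cohomology sheaves and their $\operatorname{Ext}$-groups, whose coherence gives conditions (5) and (6). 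I would treat these verifications as citations to the deformation theory of algebraic stacks and concentrate the effort on the converse.

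For the \emph{hard direction}, the goal is to produce a smooth surjection $U\to\C$ from a scheme, together with a diagonal representable by algebraic spaces. I fix a point $\xi_0\in\C(\spec k')$ with $k'$ a field of finite type over $S$ and build a smooth chart through it. First, using the coherence of $\operatorname{Def}$ and the obstruction theory (conditions (5) and (6)), the relevant tangent and obstruction spaces at $\xi_0$ are finite-dimensional, so Schlessinger--Rim theory yields a \emph{formal versal deformation}: a compatible system $\{\xi_n\}$ over $\spec(B/\m^n)$ for a complete local noetherian ring $(B,\m)$. Second, [Effectivity] converts this system into a single effective object $\hat\xi\in\C(\spec B)$. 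Third --- and this is where excellence of $S$ is used --- Artin approximation replaces $\hat\xi$ over the non--finite-type base $\spec(B)$ by an object $\xi\in\C(U)$ over an affine scheme $U$ of finite type over $S$, agreeing with $\hat\xi$ to first order at a point $u_0$; this $\xi$ is \emph{formally versal} at $u_0$.

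\emph{Openness of versality} is the main obstacle, and it is exactly the step the coherence hypotheses are engineered to overcome. One must show that the locus of points of $U$ at which $\xi$ is versal is open. Versality at $u_0$ is a statement about lifting deformations, and to spread it out one needs to control the deformation and obstruction modules simultaneously over all of $U$: coherence (condition (5)) ensures that $\operatorname{Def}$ and $\operatorname{Aut}$ are represented by coherent sheaves compatible with base change, while the $n$-step coherent obstruction theory (condition (6)) supplies a coherent module receiving the obstruction classes, so that the vanishing needed for smoothness becomes an open condition by semicontinuity. Shrinking $U$ to this open versal locus, [Limit preservation] upgrades formal versality to genuine smoothness of $U\to\C$ at $u_0$. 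Taking the disjoint union of such charts over a family of points $\xi_0$ large enough to meet every point of $\C$ yields a smooth surjection $U\to\C$.

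It remains to show that $\Delta\colon\C\to\C\times_S\C$ is representable by algebraic spaces, i.e.\ that each $\operatorname{Isom}$-presheaf is an algebraic space. I would handle this by bootstrapping: the six conditions restrict to the $\operatorname{Isom}$-functors, which are \emph{sheaves} with no nontrivial automorphisms, so the (easier) criterion for algebraic spaces applies to them, with [Homogeneity] and the coherence of $\operatorname{Aut}$ (condition (5)) furnishing the required deformation and obstruction theory for $\operatorname{Isom}$. With a representable diagonal and a smooth surjection from a scheme in hand, $\C$ is an algebraic stack, and [Limit preservation] makes it locally of finite presentation over $S$.
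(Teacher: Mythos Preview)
The paper does not give its own proof of this statement: Theorem~\ref{thm:artin} is quoted verbatim as \cite[Theorem~A]{2012arXiv1206.4182H} and is stated ``for the sake of completeness'' with no argument supplied. There is therefore nothing in the paper to compare your proposal against.

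That said, your outline is a faithful sketch of the standard Artin--Hall argument and would serve as a reasonable summary of the cited proof. A couple of small points worth tightening if you were to write it out in full: in the easy direction, effectivity for an algebraic stack is not literally Grothendieck's existence theorem but rather follows by pulling back along a smooth atlas and using descent together with effectivity for schemes; and in the hard direction, the passage from ``coherent obstruction theory'' to ``openness of versality'' is the genuinely delicate step in Hall's paper and deserves more than an appeal to semicontinuity, since one must also control how the obstruction classes vary in families and interact with the half-exactness of the multi-step theory. But as a high-level roadmap your proposal is correct.
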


\begin{remark}\label{rem:exc}
We refer to \cite[Definition~12.49]{wedhorn} for a definition of an excellent scheme. A scheme of finite type over a field is excellent, which is the setting we will consider in this paper.
\end{remark}
A category fibered in \emph{setoids} over $\sch_S$ is equivalent to a functor $\sch_S^{\operatorname{op}}\to\set$. Moreover, this equivalence restricts to an equivalence between algebraic stacks fibered in setoids and algebraic spaces. Thus, we have the following immediate consequence.
\begin{corollary}\label{cor:artinspaces}
Let $F\colon\sch_S^{\operatorname{op}}\to\set$ be a functor, where $S$ is excellent. Then, $F$ is an algebraic space that is locally of finite presentation over $S$ if and only if $F$ is a sheaf in the \'etale topology, and the analogous versions of properties 2-6 in Theorem~\ref{thm:artin}, given by replacing \qq{equivalence of categories} with \qq{bijections} and removing the condition on the automorphisms, are satisfied. 
\end{corollary}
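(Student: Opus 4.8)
The plan is to reduce the corollary to Theorem~\ref{thm:artin} via the $2$-equivalence between categories fibered in setoids over $\sch_S$ and functors $\sch_S^{\mathrm{op}}\to\set$. Given a functor $F$, let $\C_F$ be the associated category fibered in setoids. By the discussion preceding the statement, $\C_F$ is an algebraic stack precisely when $F$ is an algebraic space, and ``locally of finite presentation over $S$'' is a property of the structure morphism that is insensitive to the passage between $\C_F$ and $F$; so it suffices to check that, for a category fibered in setoids, each of the conditions (1)--(6) of Theorem~\ref{thm:artin} is equivalent to the corresponding condition on $F$ in the statement.

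For condition (1) I would use the standard fact that a category fibered in setoids is a stack for the étale topology if and only if the associated functor is an étale sheaf; the ``gluing of morphisms'' part of the stack axioms is automatic because a setoid has at most one arrow between any two objects, so only the sheaf axiom on sections remains. For conditions (2)--(4) the argument is uniform: a setoid is canonically equivalent to the discrete category on its set of isomorphism classes; equivalences of categories, as well as $2$-limits and $2$-colimits formed in the $2$-category of categories, are invariant under such equivalences; and for discrete categories the $2$-limits and $2$-colimits occurring in (2)--(4) coincide with the ordinary limits and colimits of the underlying sets. Concretely, the filtered $2$-colimit $\varinjlim_j\C_F(\spec A_j)$, the $2$-fiber product $\C_F(\spec B)\times_{\C_F(\spec A)}\C_F(\spec A')$, and the $2$-limit $\varprojlim_n\C_F(\spec B/\m^n)$ are all setoids, having as their sets of isomorphism classes the sets $\varinjlim_j F(\spec A_j)$, $F(\spec B)\times_{F(\spec A)}F(\spec A')$, and $\varprojlim_n F(\spec B/\m^n)$, respectively. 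Since a functor between setoids is an equivalence of categories exactly when it is a bijection on isomorphism classes, each of the functors in (2)--(4) is an equivalence precisely when the named map of sets is a bijection. In the homogeneity statement (3) this uses that the base $\C_F(\spec A)$ is discrete, which forces the comparison isomorphism in the $2$-fiber product to be an identity, so nothing is lost in passing to sets.

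For condition (5), note that for any $\xi\in\C_F(T)$ the functor $\operatorname{Aut}_{\C_F/S}(\xi,-)$ is identically zero, since no object of a setoid has a non-identity automorphism, and the zero functor is trivially coherent; this is exactly why the automorphism hypothesis may be dropped, leaving only the requirement that $\operatorname{Def}_{\C_F/S}(\xi,-)=\operatorname{Def}_{F/S}(\xi,-)$ be coherent. Condition (6) is unchanged, since the definition of a coherent $n$-step obstruction theory is phrased for a category fibered in groupoids and applies verbatim to $\C_F$, hence to $F$. Feeding these six equivalences into Theorem~\ref{thm:artin} yields the corollary.

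The only point that I expect to require genuine care is the bookkeeping in conditions (2)--(4): one must be sure that it is precisely the $2$-categorical limits and colimits appearing there, and not their naive $1$-categorical analogues, that reduce to the limits and colimits of sets, and that this reduction is compatible with the structure functors. The remaining verifications are routine applications of the setoid/functor dictionary.
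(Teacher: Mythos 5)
Your argument is correct and is exactly the route the paper takes: the paper states this corollary as an ``immediate consequence'' of Theorem~\ref{thm:artin} via the equivalence between categories fibered in setoids and set-valued functors (under which algebraic stacks in setoids correspond to algebraic spaces), and your proposal simply spells out the resulting condition-by-condition translation, including the correct observations that equivalences of setoids are bijections on isomorphism classes and that the automorphism functor is trivially coherent and hence can be dropped.
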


We will in Section~\ref{sec:main} show that the good Hilbert functor $\GoodHilb_\X$ satisfies the properties of Corollary~\ref{cor:artinspaces}, and is therefore an algebraic space.
All properties except condition~$4$ turn out to follow from previous work. Indeed, condition 1 is a standard descent argument (both for stacks and for sheaves). Condition~2 is equivalent to $\mathscr{C}$ being locally finitely presented, and can also be shown by standard methods, see e.g.\ \cite[Appendix~B]{MR3272071}.
In \cite{2012arXiv1206.4182H}, Hall also stated the following results that are helpful when verifying conditions~3, 5, and 6. 
\begin{proposition}\label{prop:bootstrap}
Fix an algebraic stack $f\colon\X\to S$, and let $\morj_\X$ be the fibered category consisting of pairs $(T,\Z\to\X_T)$ where $T$ is an $S$-scheme, $\Z\to\X_T$ is a morphism of algebraic stacks, and the composition $\Z\to\X_T\to T$ is flat and locally finitely presented. Then, $\morj_\X$ satisfies property 3. Moreover, given a formally \'etale morphism $\C\to \morj_\X$ of categories fibered in groupoids, then 3 is also satisfied for $\C$. 
\end{proposition}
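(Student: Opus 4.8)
\emph{Strategy.} The plan is to reduce the whole statement to the homogeneity of $\morj_\X$ itself, and to reduce that in turn to Milnor--Ferrand patching of flat modules over an affine base. The one combinatorial fact I use repeatedly is that, setting $C=B\times_AA'$, the ideal $\ker(C\to B)$ is isomorphic to $\ker(A'\to A)$ and hence nilpotent; so $\spec(B)\hookrightarrow\spec(C)$ is \emph{also} a nilpotent closed immersion, and the square
\[
\xymatrix{\spec(A)\ar[r]^-{i}\ar[d]&\spec(A')\ar[d]\\\spec(B)\ar[r]&\spec(C)}
\]
is both cartesian and cocartesian (a Ferrand pushout), with both horizontal arrows nilpotent closed immersions.

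\emph{The bootstrap, granting homogeneity of $\morj_\X$.} Let $p\colon\C\to\morj_\X$ be formally \'etale. Given $\eta_B\in\C(\spec B)$, $\eta_{A'}\in\C(\spec A')$ and an isomorphism of their pullbacks to $\spec(A)$: first push forward by $p$ and glue to $\xi_C\in\morj_\X(\spec C)$ using homogeneity of $\morj_\X$; then, since $\spec(B)\hookrightarrow\spec(C)$ is a nilpotent closed immersion, formal \'etaleness lifts $\eta_B$ to an essentially unique $\eta_C\in\C(\spec C)$ lying over $\xi_C$; finally, pulling back along $\spec(A')\hookrightarrow\spec(C)$ and using unique infinitesimal lifting along $i$ identifies $\eta_C|_{\spec(A')}$ with $\eta_{A'}$ compatibly with the gluing over $\spec(A)$. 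The identical argument with morphisms in place of objects yields full faithfulness of $\C(\spec C)\to\C(\spec B)\times_{\C(\spec A)}\C(\spec A')$ (reduce to square-zero steps by d\'evissage along the nilpotent ideals if preferred). So everything comes down to property~3 for $\morj_\X$.

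\emph{Reducing $\morj_\X$ to flat base change of stacks.} An object of $\morj_\X(T)$ is a flat, locally finitely presented algebraic stack $\Z\to T$ together with a morphism $\Z\to\X_T$, and I would treat these two ingredients separately. Let $\mathbf{Fl}$ be the fibered category of flat, locally finitely presented morphisms of algebraic stacks. Granting that $\mathbf{Fl}$ is homogeneous, flat lfp stacks $\Z_B\to\spec(B)$ and $\Z_{A'}\to\spec(A')$ together with an identification of their pullbacks to $\spec(A)$ glue to an essentially unique flat lfp $\Z\to\spec(C)$ recovering them, and this $\Z$ is the Ferrand pushout $\Z_B\sqcup_{\Z_A}\Z_{A'}$ (here $\Z_A$ is the common pullback to $\spec(A)$); its universal property then extends compatible morphisms $\Z_B\to\X_B$, $\Z_{A'}\to\X_{A'}$ to a morphism $\Z\to\X_C$, and conversely restriction recovers the pieces, giving the desired equivalence. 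One must argue on the $\Z$-side here, because base change along the possibly non-flat morphism $\X\to S$ need not commute with the pushout $\spec(C)=\spec(B)\sqcup_{\spec(A)}\spec(A')$. Hence it suffices to show that $\mathbf{Fl}$ is homogeneous.

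\emph{Homogeneity of $\mathbf{Fl}$ --- the main obstacle.} This is the heart of the matter. Since $\mathbf{Fl}$ is smooth-local on the source and a flat, locally finitely presented morphism is smooth-locally a flat finitely presented morphism of affine schemes, I would descend to presentations: fix a smooth atlas of $\Z_A$ and lift it along the nilpotent thickenings $\spec(A)\hookrightarrow\spec(A')$ and $\spec(B)\hookrightarrow\spec(C)$ --- possible by deformation theory of smooth morphisms, inducting in square-zero steps --- so that atlases and their groupoids on the two sides of the gluing are made to agree; the nilpotent thickenings induce equivalences of the relevant small \'etale sites, which keeps the bookkeeping consistent. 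This reduces the claim to classical Milnor--Ferrand patching over the Milnor square: since $A'\to A$ is surjective, $M\mapsto(M\otimes_CB,\ M\otimes_CA',\ \mathrm{can})$ is an equivalence from flat $C$-modules to triples of flat modules equipped with a gluing isomorphism, compatible with algebra structures, finite presentation, and surjections; faithfully flat descent then carries the statement back up to $\mathbf{Fl}$. The genuine difficulty is precisely this globalization: the module-level patching is clean, but transporting it to algebraic \emph{stacks} requires the compatible choice of atlases above, together with care that flatness is exactly what forces $M\xrightarrow{\ \sim\ }(M\otimes_CB)\times_{M\otimes_CA}(M\otimes_CA')$ --- which fails for general $M$ --- whereas the formal and reduction steps are routine.
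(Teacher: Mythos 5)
The paper does not actually prove this proposition; it defers entirely to Hall's paper, citing three lemmas that respectively handle the formally \'etale bootstrap, the reduction of $\morj_\X$ to the homogeneity of flat families, and the existence of Ferrand pushouts of flat, locally finitely presented algebraic stacks along nilpotent closed immersions. Your proposal reconstructs precisely this three-step architecture. The bootstrap via unique infinitesimal lifting along the two nilpotent closed immersions $\spec(A)\hookrightarrow\spec(A')$ and $\spec(B)\hookrightarrow\spec(C)$ is the standard argument and is correct (including the identification $\ker(C\to B)\cong\ker(A'\to A)$). Your observation that one must form the pushout on the $\Z$-side, because $\X_C$ need not agree with $\X_B\sqcup_{\X_A}\X_{A'}$, is exactly the content of the reduction step. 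And Milnor--Ferrand patching of flat modules --- with the key point that $M\to(M\otimes_CB)\times_{M\otimes_CA}(M\otimes_CA')$ is an isomorphism precisely because $M$ is flat --- is the correct engine for existence of the pushout. In substance you are supplying the proof that the paper outsources.

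One step of your sketch is misstated, though the intended argument is standard and repairable. You propose to \qq{fix a smooth atlas of $\Z_A$ and lift it along the nilpotent thickenings $\spec(A)\hookrightarrow\spec(A')$ and $\spec(B)\hookrightarrow\spec(C)$.} Lifting along $\spec(B)\hookrightarrow\spec(C)$ makes no sense at this stage, since $\Z_C$ is the object being constructed; and an atlas of $\Z_A$ cannot be lifted to $\Z_B$, because $\Z_A\to\Z_B$ is merely an affine morphism, not a nilpotent closed immersion --- in the homogeneity condition only $i$ is assumed to be one. The correct order is: choose an atlas $U_B\to\Z_B$ with $U_B$ a disjoint union of affines, pull it back along the affine morphism $\Z_A\to\Z_B$ to obtain $U_A\to\Z_A$, and only then deform $U_A$ (and the groupoid $R_A=U_A\times_{\Z_A}U_A$) along the genuine nilpotent thickening $\Z_A\hookrightarrow\Z_{A'}$, using that obstructions to deforming smooth affine schemes vanish. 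With that correction, $U_C:=\spec\bigl(\Gamma(U_B)\times_{\Gamma(U_A)}\Gamma(U_{A'})\bigr)$ and likewise $R_C$ present $\Z_C$, and the rest of your argument goes through. You also assert without justification that the module-level equivalence is compatible with finite presentation of algebras; this is true but is not formal and deserves its own citation or short argument.
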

\begin{proof}
This is proved in \cite{2012arXiv1206.4182H}. More specifically, it is a combination of Lemma~9.3, Lemma~A.6, and Lemma~1.5(9) in op.\,cit. 
\end{proof}

\begin{proposition}\label{prop:cohoftheo}
With the notation of Proposition~\ref{prop:bootstrap}. The automorphisms, deformations, and a $2$-step obstruction theory of an object $(T,g\colon\Z\to\X_T)\in\morj_\X$ are given by certain functors of the form 
\[\ext_{\O_\Z}^n\bigl(\fF,g^\ast f_T^\ast(-)\bigr)\colon\QCoh(T)\to\mathbf{Ab},\] 
where $n=0,1,2$, and $\fF$ is a bounded complex with coherent cohomology. 
Moreover, if ${\C\to\morj_\X}$ is formally \'etale, then these functors also describe automorphisms, deformations and a 2-step obstruction theory for $\C$.
\end{proposition}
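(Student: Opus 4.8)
The plan is to reduce everything to the case treated by Hall in \cite{2012arXiv1206.4182H} via the local description of $\morj_\X$. Recall that an object $(T,g\colon\Z\to\X_T)\in\morj_\X$ is, by definition, a morphism of algebraic stacks with $\Z\to T$ flat and locally of finite presentation; the relevant deformation theory is therefore the deformation theory of the morphism $g$ over the fixed target $\X_T$. The key input is the cotangent complex $L_{\Z/\X_T}$, which controls deformations of $g$: since $\Z\to\X_T$ is a morphism to a fixed stack, infinitesimal lifts of $g$ along a square-zero extension $T\hookrightarrow T'$ by a quasi-coherent sheaf $M$ on $T$ are governed by $\ext^i_{\O_\Z}\bigl(L_{\Z/\X_T},\, g^\ast f_T^\ast M\bigr)$ for $i=-1,0,1$ (automorphisms, deformations, obstructions respectively). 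One then sets $\fF=L_{\Z/\X_T}$ (or rather a shift/truncation of it), which is a complex with coherent cohomology that is bounded above; boundedness below in the relevant range comes from the flatness of $\Z\to T$, so that the functors in question only see a bounded truncation. This is exactly the content of \cite[Section~9 and Lemma~9.3]{2012arXiv1206.4182H}, and I would cite it directly rather than reprove the cotangent-complex formalism.

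The first step is thus to record that $\morj_\X\to\sch_S$ has the deformation and obstruction theory described by Hall, with $\fF$ taken to be the truncated cotangent complex $L_{\Z/\X_T}$; here one uses that $\X\to S$ (equivalently $\X_T\to T$) is locally of finite presentation so that $L_{\Z/\X_T}$ has coherent cohomology. The second step is the propagation along a formally \'etale morphism $\C\to\morj_\X$: since formally \'etale morphisms induce isomorphisms on all relative deformation and obstruction groups, the automorphism, deformation, and obstruction functors of $\C$ at a point lying over $(T,g)$ coincide with those of $\morj_\X$ at $(T,g)$. This is Hall's \cite[Lemma~1.5]{2012arXiv1206.4182H} (the same formalism used in Proposition~\ref{prop:bootstrap}), and it immediately transports the explicit $\ext$-description to $\C$. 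The final bookkeeping is to note that these $\ext$-functors, viewed as functors $\QCoh(T)\to\mathbf{Ab}$ in the variable $M=(-)$, are of the stated shape $\ext^n_{\O_\Z}(\fF, g^\ast f_T^\ast(-))$ with $n=0,1,2$ after the usual renumbering (shifting $\fF$ so the automorphism group sits in degree $0$).

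The main obstacle — really the only substantive point — is confirming that the cotangent complex of the morphism $g\colon\Z\to\X_T$ has \emph{coherent} (not merely quasi-coherent) cohomology and that the relevant truncation is a \emph{bounded} complex, since $\X_T$ is an algebraic stack rather than a scheme. This is where the standing hypotheses enter: $\X$ is of finite presentation over $S$ with quasi-compact, quasi-separated (indeed separated) diagonal, so $L_{\X_T/T}$ has coherent cohomology in a bounded range, and flatness of $\Z\to T$ controls the amplitude of $L_{\Z/\X_T}$ in the degrees that contribute to $\ext^n$ for $n\le 2$. Everything else is a direct appeal to the cited lemmas of \cite{2012arXiv1206.4182H}, so the proof is short: assemble Hall's Lemma~9.3 for the explicit form over $\morj_\X$, then Lemma~1.5 for the transfer along the formally \'etale $\C\to\morj_\X$.
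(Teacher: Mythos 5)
Your proposal is correct and follows essentially the same route as the paper: both reduce the first claim to Section~9 of Hall's paper (the cotangent-complex description of automorphisms, deformations, and obstructions for $\morj_\X$) and then transport it along the formally \'etale morphism $\C\to\morj_\X$. The only small discrepancy is the reference for the transfer step: the paper invokes Lemmas~6.3 and~6.11 of op.\,cit.\ (which concern deformation and obstruction theories along formally \'etale morphisms), whereas Lemma~1.5 is the general toolbox used for the homogeneity condition in Proposition~\ref{prop:bootstrap}; the substance of your argument is nonetheless the right one.
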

\begin{proof}
The first part is proved in Section~9 of \cite{2012arXiv1206.4182H}, while the second part is a combination of Lemma~6.3 and Lemma~6.11 in op.\,cit. 
\end{proof}

We will in our Lemma~\ref{lem:fixarallt} show that the inclusion $\GoodHilb_\X\to\morj_\X$ is formally \'etale, which by the above implies that many of the properties in Artin's criterion hold for the good Hilbert functor. However, showing that the functors in Proposition~\ref{prop:cohoftheo} are coherent is not trivial, but for the good Hilbert functor this follows by earlier results, cf.\,Lemma~\ref{lem:theoroes}.
 Thus, the only remaining problem is showing condition 4, which requires formal GAGA.

\section{Formal GAGA}\label{sec:formalgaga1}
Let $A$ be a complete local noetherian ring with maximal ideal $\m$, and consider an algebraic stack $f\colon\X\to\spec(A)$ of finite type. Then, we let ${\X_0=f^{-1}\bigl(V(\m)\bigr)}$ be the closed substack given by the inverse image of the unique closed point of $\spec(A)$, and let $\I\subseteq\O_\X$ be the corresponding ideal sheaf. Given a coherent sheaf $\fF$ on $\X$, we define \emph{the completion of $\fF$ along $\X_0$} as the sheaf
\[\widehat{\fF}:=\varprojlim\kern-0.2em{}_n\kern0.1em\fF/\I^{n+1}\fF\]
on $\X_{\text{lis-\'et}}$. 
In particular, we define the sheaf (of rings)
\[\O_{\widehat{\X}}:=\widehat{\O_\X}=\varprojlim\kern-0.2em{}_n\kern0.1em\O_\X/\I^{n+1}.\]
Given an algebraic stack $\X$, we define the \emph{completion of $\X$ along $\X_0$} as the ringed topos $\widehat{\X}=(\X_{\text{lis-\'et}},\O_{\widehat{\X}})$, see \cite{conrad_gaga}. This constuction gives a natural completion functor $\Coh(\X)\to\Coh(\widehat{\X})$ defined by $\fF\mapsto\widehat{\fF}$. 
As we are working over a fixed scheme $\spec(A)$ with a unique closed point, the substack $\X_0$ is uniquely defined and we will talk about {completions} without referring to $\X_0$ in the sequel. 

Moreover, letting \[\X_n=\X\times_{\spec(A)}\spec(A/\m^{n+1}),\] we get a sequence $\X_0\to\X_1\to\X_2\to\cdots$ of closed immersions. We define the category of compatible systems $\varprojlim\kern-0.2em{}_n\kern0.0em\Coh(\X_n)$ consisting of sequences $(\fF_n)$ where $\fF_{n}\in\Coh(\X_n)$ and $\fF_{n+1}/\I^{n+1}\fF_{n+1}=\fF_{n}$ for all $n\ge0$. In other words, a compatible system $(\fF_n)$ is a sequence of coherent sheaves such that $\fF_n\otimes_{\O_{\X_n}}\O_{\X_m}=\fF_m$ for all $m\le n$. 
Studying such compatible systems is equivalent to studying coherent sheaves on $\widehat{\X}$, as the following result shows.

\begin{theorem}[{\cite[Theorem~2.3]{conrad_gaga}}]\label{thm:defgagatyp}
The natural functor \[\Coh(\widehat{\X})\to\varprojlim\kern-0.2em{}_n\kern0.1em\Coh(\X_n),\] is an equivalence of categories. 
\end{theorem}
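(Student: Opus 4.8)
The plan is to produce an explicit quasi-inverse and then reduce everything to a statement about finitely generated modules over a noetherian ring, where the Artin--Rees lemma and the Mittag-Leffler condition do the work. The natural functor sends a coherent $\O_{\widehat{\X}}$-module $\fF$ to the compatible system $\bigl(\fF\otimes_{\O_{\widehat{\X}}}\O_{\X_n}\bigr)_n$, and the candidate quasi-inverse sends a compatible system $(\fF_n)$ to $\varprojlim_n\fF_n$, regarded as a sheaf of $\O_{\widehat{\X}}$-modules on $\X_{\text{lis-\'et}}$. Concretely the theorem amounts to three assertions: (i) $\varprojlim_n\fF_n$ is again coherent over $\O_{\widehat{\X}}$; (ii) forming quotients recovers the system, i.e.\ $\bigl(\varprojlim_n\fF_n\bigr)\otimes_{\O_{\widehat{\X}}}\O_{\X_m}\cong\fF_m$ for all $m$; and (iii) every coherent sheaf on $\widehat{\X}$ is complete, i.e.\ $\fF\to\varprojlim_n\bigl(\fF\otimes_{\O_{\widehat{\X}}}\O_{\X_n}\bigr)$ is an isomorphism. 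Granting these, the two functors are mutually quasi-inverse on the nose.

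First I would reduce to the affine case. Since $\X$ is quasi-compact with separated diagonal, choose a smooth surjection $p\colon U\to\X$ with $U=\spec(B)$ affine; then $B$ is noetherian, and if $I\subseteq B$ cuts out $p^{-1}(\X_0)$, the $I$-adic completion $\widehat{B}=\varprojlim_n B/I^{n+1}$ is again noetherian. Coherent sheaves on $\widehat{\X}$ are, by definition, lis-\'et sheaves of $\O_{\widehat{\X}}$-modules, and a compatible system on $\X$ is the same datum as a compatible system on $U$ together with descent data along $U\times_\X U\rightrightarrows U$; both the reduction functor and the inverse-limit functor commute with restriction to $U$ and to $U\times_\X U$, the latter because smooth morphisms are flat and, the sheaves in play being coherent over noetherian rings, completion commutes with the relevant flat base change. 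Hence it suffices to show that $M\mapsto\bigl(M/I^{n+1}M\bigr)_n$ is an equivalence from finitely generated $\widehat{B}$-modules to compatible systems $(M_n)$ with $M_{n+1}\otimes_{B/I^{n+2}}B/I^{n+1}=M_n$.

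For the affine statement, (iii) is the classical fact that a finitely generated module over a noetherian ring is complete for any adic topology, which follows from Artin--Rees. For (i) and (ii), let $(M_n)$ be a compatible system and $M:=\varprojlim_n M_n$. The transition maps $M_{n+1}\to M_n$ are surjective, so the system is Mittag--Leffler, $R^1\varprojlim$ vanishes, and $M$ surjects onto each $M_n$. Compatibility gives $M_n/IM_n=M_0$ for all $n$, so by Nakayama over the ring $B/I^{n+1}$ (whose ideal $I/I^{n+1}$ is nilpotent) each $M_n$ is generated by the same number $r$ of elements, and one may choose compatible systems of generators, producing a map $\phi\colon\widehat{B}^{\,r}\to M$ which is $\varprojlim$ of the surjections $(B/I^{n+1})^r\twoheadrightarrow M_n$; surjectivity of $\phi$ then follows from vanishing of $R^1\varprojlim$ of the kernels, which is Artin--Rees applied to the submodule filtration. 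Thus $M$ is a finitely generated $\widehat{B}$-module, and a final application of Artin--Rees to a finite presentation of $M$ shows that the filtration by $\ker(M\to M_n)$ is $I$-adic, giving $M/I^{n+1}M\cong M_n$ and hence (ii).

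The main obstacle is step (i): showing that $\varprojlim_n M_n$ is finitely generated over $\widehat{B}$ and that passing to the quotients $-\otimes_{\O_{\widehat{\X}}}\O_{\X_n}$ loses no information. This is precisely the point at which one cannot argue formally and must invoke Artin--Rees together with the Mittag--Leffler condition; the uniform bound $M_n/IM_n=M_0$ on the number of generators is what makes the limit manageable. Once the affine case is settled, the return to the algebraic stack is routine, since all the functors in sight are compatible with smooth --- hence flat and finitely presented --- base change, so the affine equivalence descends along $p$ to an equivalence $\Coh(\widehat{\X})\xrightarrow{\ \sim\ }\varprojlim_n\Coh(\X_n)$.
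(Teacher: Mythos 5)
The paper gives no proof of this statement---it is quoted directly from Conrad's formal GAGA notes---and your argument is essentially the standard proof given there: reduce along a smooth affine presentation to the classical equivalence between finitely generated modules over the $I$-adic completion of a noetherian ring and compatible systems of finite modules over the truncations, established via Nakayama, Mittag--Leffler, and Artin--Rees. Two points in your sketch deserve more care: the vanishing of $R^1\varprojlim$ of the kernels $K_n=\ker\bigl((B/I^{n+1})^r\to M_n\bigr)$ comes from the surjectivity of the transition maps $K_{n+1}\to K_n$ (a diagram chase using $\ker(M_{n+1}\to M_n)=I^{n+1}M_{n+1}$), not from Artin--Rees applied to a single submodule filtration, since the $K_n$ live in different ambient modules; and the descent step needs the affine equivalence also over $U\times_\X U$, which under the paper's standing hypothesis of separated (not affine) diagonal is only an algebraic space, together with the compatibility of completion with smooth base change, which is precisely the technical point the cited reference spends effort on.
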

We will call a compatible system $(\fF_n)$ \emph{algebraizable} if there is some $\fF\in\Coh(\X)$ such that $\fF/\I^{n+1}\fF=\fF_n$ for all $n$, that is, if the system $(\fF_n)$ lies in the essential image of the natural functor $\Coh(\X)\to\varprojlim\kern-0.2em{}_n\kern0.0em\Coh(\X_n)$.

Given a coherent sheaf $\fF\in\Coh(\X)$, we have the annihilator ideal sheaf 
\[\Ann_{\O_\X}\!(\fF)=\ker\bigl(\O_\X\to\End_{\O_\X}\!(\fF)\bigr)\] and we define the support of $\fF$ as the closed subset $\supp(\fF)\subseteq|\X|$ defined by $\Ann_{\O_\X}\!(\fF)$. 
\begin{remark}
Equivalently, one can define the support of $\fF$ as the complement of the underlying set of the largest open substack $\mathscr{U}\subseteq\X$ where $\fF$ vanishes. 
\end{remark}
The support will by construction commute with flat base change. That is, given a flat morphism $f\colon\Y\to\X$, then $\supp(f^\ast\fF)=f^{-1}(\supp \fF)$. In fact, similarly to the case for schemes, flatness is not required.
\begin{lemma}\label{lem:comwithbc}
Let $f\colon\Y\to\X$ be a morphism of algebraic stacks and let $\fF$ be a coherent sheaf on $\X$. Then, $\supp(f^\ast\fF)=f^{-1}(\supp\fF)$.
\end{lemma}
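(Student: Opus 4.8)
The plan is to reduce the statement to the affine, local case and then to a standard commutative-algebra computation about annihilators under base change. First I would note that both the formation of $\supp(\fF)$ (via the annihilator ideal sheaf $\Ann_{\O_\X}(\fF)$) and the formation of $f^{-1}$ of a closed subset are local on the source $\Y$ and on the target $\X$ in the smooth (or \'etale) topology. So I would choose a smooth presentation $V\to\X$ by a scheme, and then a smooth presentation $W\to \Y\times_\X V$ by a scheme; by compatibility of pullback and of supports with smooth morphisms (which, being flat, is the already-discussed easy case), it suffices to prove the claim for the induced morphism of schemes $W\to V$. Thus we are reduced to the case where $\X=\spec B$ and $\Y=\spec C$ are affine schemes, $f$ corresponds to a ring map $B\to C$, and $\fF$ corresponds to a finitely generated $B$-module $M$.

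In the affine case the statement becomes: $\supp_{C}(M\otimes_B C) = (\spec\varphi)^{-1}\bigl(\supp_B M\bigr)$ as subsets of $\spec C$, where $\varphi\colon B\to C$. Since $M$ is finitely generated, $\supp_B M = V(\Ann_B M)$, and $\Ann_C(M\otimes_B C) \supseteq \Ann_B(M)\cdot C$; moreover for finitely generated $M$ one has $\sqrt{\Ann_C(M\otimes_B C)} = \sqrt{\Ann_B(M)\cdot C}$ because if $M$ is generated by $m_1,\dots,m_r$ then annihilating $M\otimes C$ is the same as annihilating each $m_i\otimes 1$, and for a single element the annihilator behaves well enough after tensoring up to radical — alternatively, one uses that for a prime $\p\subset C$ with preimage $\q=\varphi^{-1}(\p)\subset B$, the localization $(M\otimes_B C)_\p = M_\q \otimes_{B_\q} C_\p$, and since $C_\p$ is faithfully flat over nothing in general but $C_\p$ is a nonzero $B_\q$-algebra, $M_\q = 0 \Rightarrow (M\otimes_B C)_\p = 0$, while $(M\otimes_B C)_\p = 0 \Rightarrow M_\q\otimes_{B_\q}C_\p = 0$, and by Nakayama applied over the local ring $C_\p$ (with $M_\q$ finitely generated over $B_\q$, hence $M_\q\otimes_{B_\q}C_\p$ finitely generated over $C_\p$) together with the fact that $C_\p\ne 0$, one concludes $M_\q = 0$. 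Hence $\p\in\supp(M\otimes_B C)$ if and only if $\q\in\supp(M)$, which is exactly the set-theoretic equality claimed.

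Concretely, I would run the argument directly at the level of points to avoid fussing with radicals of ideals: a point of $|\Y|$ lies in $\supp(f^\ast\fF)$ iff the stalk of $f^\ast\fF$ there is nonzero, and this stalk is computed as a base change of the stalk of $\fF$ at the image point; a finitely generated module over a local ring is zero iff its fiber (reduction mod the maximal ideal) is zero, by Nakayama, and the fiber of $f^\ast\fF$ at a point of $\Y$ equals the fiber of $\fF$ at the image point tensored with the (nonzero) residue field extension. So the fiber vanishes on the source precisely when it vanishes at the image, giving the equality of supports. The coherence of $\fF$ (finite generation) is used exactly once, in the Nakayama step, and this is the only place care is needed.

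The main obstacle, such as it is, is bookkeeping rather than mathematics: one must make sure the reduction through smooth presentations is legitimate, i.e.\ that $\supp$ genuinely commutes with smooth pullback (true since smooth morphisms are flat, which is the already-granted case of the lemma) and that $f^\ast$ commutes with the relevant base changes of presentations. Once that is in place, the affine statement is the cited-above Nakayama argument and there is nothing deep left. I would therefore structure the write-up as: (i) reduce to schemes via presentations, citing the flat case; (ii) reduce to the affine case; (iii) the pointwise Nakayama argument.
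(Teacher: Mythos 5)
Your proposal is correct and follows essentially the same route as the paper: reduce to a morphism of schemes via smooth presentations of $\X$ and of $\Y\times_\X\X$-cover, using that supports commute with flat pullback, and then invoke the scheme case. The only difference is that where the paper cites \cite[Tag~0BUR]{stacks-project} for the fact that supports of coherent sheaves on schemes commute with arbitrary base change, you prove that fact directly with the fiberwise Nakayama argument, which is exactly the content of that reference.
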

\begin{proof}
Let $u\colon U\to \X$ be a smooth surjection with $U$ a scheme, and let $V\to U\times_\X\Y$ be a smooth surjection with $V$ a scheme. Then, we have a commutative diagram
\[\xymatrix{V\ar[r]^v\ar[d]_g&\Y\ar[d]^f\\U\ar[r]^u&\X}\]
where both $u\colon U\to \X$ and $v\colon\ V\to\Y$ are flat and surjective. Supports commute with flat base change, so $\supp(u^\ast\fF)=u^{-1}(\supp\fF)$ and $\supp(v^\ast f^\ast\fF)=v^{-1}(\supp f^\ast\fF)$. \newpage Moreover, supports of coherent sheaves on schemes commute with arbitrary base change \cite[Tag~0BUR]{stacks-project}, so $\supp(g^\ast u^\ast\fF)=g^{-1}(u^\ast\supp\fF)$. Putting these together, we get
\[v^{-1}(\supp f^\ast\fF)=\supp(v^\ast f^\ast\fF)=\supp(g^\ast u^\ast\fF)=g^{-1}u^{-1}(\supp\fF)=v^{-1}f^{-1}(\supp\fF).\]
Since $v$ is surjective it now follows that $\supp(f^\ast\fF)=f^{-1}(\supp\fF)$.
\end{proof}

The support of a coherent sheaf  can be given the structure of a closed substack by giving it the reduced structure, and we let $\Cohps(\X)$ denote the full subcategory of $\Cohps(\X)$ consisting of the coherent sheaves with proper support. That is, we let $\Cohps(\X)$ consist of those~$\fF$ for which $\supp(\fF)\to\spec(A)$ is proper. 

\begin{remark}
The choice of stack structure on the support is irrelevant for asking when $\supp(\fF)\to\spec(A)$ is proper. Indeed, a morphism being separated and universally closed depends on the underlying topological spaces, and being of finite type is automatic here.
\end{remark}
We let $\Cohps(\widehat{\X}):=\varprojlim\kern-0.2em{}_n\kern0.1em\Cohps(\X_n)$. 
A classical result within the theory of formal GAGA is Groth\-endieck's existence theorem, which states that the completion functor\[{\Cohps(\X)\to\Cohps(\widehat{\X})}\] is an equivalence of categories if $\X\to\spec(A)$ is separated and of finite type. In particular, if ${\X\to\spec(A)}$ is proper, then there is an equivalence $\Coh(\X)\to\Coh(\widehat{\X})$. 
This was originally proved for schemes by Grothendieck \cite{MR0217085}, and for algebraic spaces by Knutson \cite{MR0302647}. Later, this was generalized to Deligne-Mumford stacks by Olsson and Starr in \cite{MR2007396} and to algebraic stacks by Olsson in \cite{MR2183251}, \cite[Appendix~A]{MR2239345}.

In \cite{MR3350157} a version of Grothendieck's existence theorem was proved when ${\X\to\spec(A)}$ is not separated, but instead when $\spec(A)$ is a good moduli space of $\X$. More specifically, they showed that if $\X$ is a noetherian algebraic stack with the resolution property, and $\X\to\spec(A)$ is a good moduli space, then there is an equivalence $\Coh(\X)\to\Coh(\widehat{\X})$. In that paper it was also remarked that this could be generalized to when $\X$ admits a good moduli space $\X\to X$, such that $X\to\spec(A)$ is separated and of finite type. 
We will make this claim precise in the next section. Before that we state a useful result which is a special case of more general facts explained in \cite{MR3350157}. 

\begin{lemma}[{\cite[Lemma~3.2-Remark~3.5]{MR3350157}}]\label{lem:exactnesshet}\label{lem:generellt}
Let $\X$ be a noetherian algebraic stack. Then, the completion functor $\Coh(\X)\to\Coh(\widehat{\X})$ is exact. Moreover, the canonical map \[{\shom_{\O_\X}(\fF,\fG)}^{\wedge}\to\shom_{\O_{\widehat{\X}}}(\widehat{\fF},\widehat{\fG})\] is an isomorphism for any $\fF,\fG\in\Coh(\X)$.
\end{lemma}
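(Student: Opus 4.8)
The plan is to reduce both statements to local computations on a smooth atlas, where the corresponding facts for schemes are classical. First I would choose a smooth presentation $u\colon U\to\X$ with $U$ an affine scheme, noting that $\X$ is noetherian so such an atlas exists and $U$ may be taken noetherian. Because $u$ is flat, completion commutes with $u^\ast$: the ideal sheaf $\I$ pulls back to the ideal $\I_U$ cutting out $U_0=u^{-1}(\X_0)$, and $u^\ast(\fF/\I^{n+1}\fF)=u^\ast\fF/\I_U^{n+1}u^\ast\fF$ since $u^\ast$ is exact and monoidal. Passing to the inverse limit is compatible with the (exact, limit-preserving) pullback along the flat morphism $u$, so $u^\ast\widehat{\fF}=\widehat{u^\ast\fF}$ as sheaves on $U_{\mathrm{lis\text{-}\acute et}}$, and likewise on the Čech nerve $U\times_\X U$, etc. Exactness of $\fF\mapsto\widehat{\fF}$ can then be checked after applying $u^\ast$, where it becomes the statement that $M\mapsto\widehat{M}$ is exact on finitely generated modules over the noetherian ring $\Gamma(U,\O_U)$ with respect to the ideal $\I_U$ — this is the standard Artin–Rees/flatness fact that $\widehat{M}=M\otimes_{\O_U}\widehat{\O_U}$ and $\widehat{\O_U}$ is flat over $\O_U$. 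Since $u^\ast$ is faithfully exact on quasi-coherent sheaves (it is smooth and surjective), exactness on $\X$ follows.

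For the second assertion, I would first observe that the canonical map $\shom_{\O_\X}(\fF,\fG)^\wedge\to\shom_{\O_{\widehat\X}}(\widehat{\fF},\widehat{\fG})$ is a map of sheaves on $\X_{\mathrm{lis\text{-}\acute et}}$, hence can be checked to be an isomorphism after pulling back along the flat cover $u$. On $U$ it becomes the assertion that for finitely generated modules $M,N$ over the noetherian ring $R=\Gamma(U,\O_U)$ one has $\hom_R(M,N)\otimes_R\widehat{R}\xrightarrow{\sim}\hom_{\widehat{R}}(\widehat{M},\widehat{N})$; since $\hom_R(M,N)$ is itself a finitely generated $R$-module (as $M$ is finitely presented and $R$ noetherian) and $\widehat{R}$ is flat over $R$, the left side is its completion, and the isomorphism is the usual "completion commutes with Hom of coherent sheaves over a noetherian ring." The compatibility of this map with the descent data along $U\times_\X U\rightrightarrows U$ is formal, using that $\shom$ and completion are both local for the lisse-étale topology, so the isomorphism descends to $\X$.

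The one point needing a little care is the interaction between the lisse-étale sheaf $\widehat{\fF}$ (a limit of the $\fF/\I^{n+1}\fF$ computed in the topos $\X_{\mathrm{lis\text{-}\acute et}}$) and its restriction to the atlas: one must know that $u^\ast$ commutes with the relevant countable inverse limit, which holds because $u^\ast$ has a right adjoint $u_\ast$ only in the naive sense but, more to the point, because the transition maps are surjective and the system is Mittag-Leffler, so the limit is computed termwise on any affine of the lisse-étale site and pullback along the flat $u$ is exact on the relevant categories of coherent modules. Once this compatibility is in hand, everything is a translation of the classical noetherian-ring statements, so I expect no real obstacle beyond bookkeeping with the lisse-étale topos; indeed, as the statement already cites \cite{MR3350157}, the proof I would write is essentially a pointer to that argument together with the flat-base-change reduction sketched above.
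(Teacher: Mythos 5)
The paper gives no proof of this lemma at all --- it is quoted directly from \cite[Lemma~3.2--Remark~3.5]{MR3350157} --- and your sketch is essentially a reconstruction of the argument in that reference: pull back along a smooth affine atlas $u\colon U\to\X$, where completion of a finitely generated module becomes $-\otimes_R\widehat{R}$ with $\widehat{R}$ flat over the noetherian ring $R$ (Artin--Rees), and $\hom$ of finitely generated modules commutes with flat base change. The only cosmetic remark is that your appeal to Mittag--Leffler is unnecessary: inverse limits of lisse-\'etale sheaves are computed objectwise on the site, so restriction to the atlas commutes with $\varprojlim$ for purely formal reasons, and flatness of $u$ then identifies $u^\ast(\fF/\I^{n+1}\fF)$ with $u^\ast\fF/\I_U^{n+1}u^\ast\fF$.
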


\section{Formal GAGA  for good moduli spaces}
\label{sec:formalgaga2}
Using the setup of the previous section, we will now consider a noetherian algebraic stack $\X\to\spec(A)$ that has the resolution property, and that admits a good moduli space $\phi\colon\X\to X$, where $X\to\spec(A)$ is separated and of finite type. By Section~\ref{sec:gms}, the pushforward and pullback of $\phi$ restricts to functors $\phi_\ast\colon\Coh(\X)\to\Coh(X)$ and $\phi^\ast\colon\Coh(X)\to\Coh(\X)$ of coherent sheaves. 

For $\X_n=\X\times_{\spec(A)}\spec(A/\m^{n+1})$, we have that $\phi_n\colon\X_n\to X_n:=\phi(\X_n)$ is a good moduli space for any $n$. With some abuse of notation we will in the sequel write $\phi_n=\phi$. Letting $I$ denote the ideal sheaf that defines $X_0$ in $X$, it follows that the ideal $I^{n+1}$ then defines $X_n$ in $X$ for all $n$. 

\begin{lemma}\label{lem:alg}
Let $\phi\colon\X\to X$ be a good moduli space. Given a compatible system $(\fF_n)$ of coherent sheaves on~$\X$, 
we have that
$(\phi_\ast\fF_n)$ is a compatible system of coherent sheaves on~$X$. Moreover, given $\fF\in\Coh(\X)$, we have that $(\phi_\ast\fF)\otimes\O_{X_n}=\phi_\ast(\fF\otimes\O_{\X_n})$. 
\end{lemma}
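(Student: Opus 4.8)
The statement has two parts. For the first part, I would start from the definition of a compatible system: we are given $\fF_n \in \Coh(\X_n)$ with $\fF_{n+1} \otimes_{\O_{\X_{n+1}}} \O_{\X_n} = \fF_n$, and we want the analogous statement for $(\phi_\ast \fF_n)$ on the schemes $X_n$. The key tool is item (6) from Section~\ref{sec:gms}, the projection formula $\phi_\ast \fG \otimes \fH \cong \phi_\ast(\fG \otimes \phi^\ast \fH)$. Applying it with $\fG = \fF_{n+1}$ and $\fH = \O_{X_n}$, and using that $\phi^\ast \O_{X_n} = \O_{\X_n}$ (which holds because $I^{n+1}$ defines $X_n$ in $X$, hence $\phi^\ast$ of the ideal $I^{n+1}$ generates $\I^{n+1}$ — this uses $\phi_\ast \O_\X = \O_X$ together with the compatibility of base change along the closed immersion), we get
\[
(\phi_\ast \fF_{n+1}) \otimes_{\O_{X_{n+1}}} \O_{X_n} \cong \phi_\ast(\fF_{n+1} \otimes_{\O_{\X_{n+1}}} \O_{\X_n}) = \phi_\ast \fF_n.
\]
One must check that this isomorphism is the natural one so that the $(\phi_\ast \fF_n)$ genuinely form a compatible system and not merely an abstractly isomorphic family; this is a diagram chase using functoriality of the projection formula. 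Also $\phi_\ast \fF_n$ is coherent by item (5) of Section~\ref{sec:gms}.

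The second statement, $(\phi_\ast \fF) \otimes \O_{X_n} = \phi_\ast(\fF \otimes \O_{\X_n})$ for a genuine sheaf $\fF \in \Coh(\X)$, is literally the projection formula again with $\fG = \fF$ and $\fH = \O_{X_n}$, once one identifies $\phi^\ast \O_{X_n} \cong \O_{\X_n}$ as above. So really the second part is the engine and the first part is its consequence applied levelwise; I would likely prove the second part first and then deduce the first.

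The main obstacle is the identification $\phi^\ast \O_{X_n} \cong \O_{\X_n}$, i.e.\ that pulling back the ideal $I^{n+1} \subseteq \O_X$ along $\phi$ recovers (the $(n+1)$-st power of) the ideal $\I$ cutting out $\X_0$. This is where the good moduli space hypotheses enter in an essential way: $\phi^\ast$ is right exact but not exact, so one cannot naively pull back the defining short exact sequence and conclude. The cleanest route is to observe that $\X_n = \X \times_{\spec A} \spec(A/\m^{n+1})$ is defined by the pullback to $\X$ of $\m^{n+1} \O_X$ (the latter being the ideal of $X_n$ in $X$, since $X \to \spec A$ pulls $\m^{n+1}$ back to the ideal of $X_n$, using item (3) that $\phi_n$ is again a good moduli space, together with $\O_{X_n} = (\phi_n)_\ast \O_{\X_n}$ from property (1) of good moduli spaces); so the comparison is really between the pullback of $\m^{n+1}$ along $\X \to \spec A$ computed through $X$ versus directly, which commutes trivially. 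Once this is in hand, everything else is a formal application of the results already quoted from Section~\ref{sec:gms}.
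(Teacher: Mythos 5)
Your proposal is correct and follows essentially the same route as the paper: both parts are immediate applications of the projection formula for good moduli spaces (item (6) of Section~\ref{sec:gms}, i.e.\ \cite[Proposition~4.5]{MR3237451}) together with the identification $\fF_n\otimes\phi^\ast\O_{X_m}=\fF_n\otimes\O_{\X_m}$. The only difference is that you spell out the justification of $\phi^\ast\O_{X_m}=\O_{\X_m}$ (via right-exactness of $\phi^\ast$ applied to the quotient by $\m^{m+1}$, which factors through $\spec(A)$), a point the paper's proof leaves implicit.
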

\begin{proof}
Let $(\fF_n)$ be a compatible system of coherent sheaf on $\X_n$. Then, since the projection morphism is an isomorphism \cite[Proposition~4.5]{MR3237451}, it follows that 
\[(\phi_\ast\fF_n)\otimes\O_{X_m}=\phi_\ast(\fF_n\otimes\phi^\ast\O_{X_m})= \phi_\ast(\fF_n\otimes\O_{\X_m})=\phi_\ast(\fF_m)\]
for $m\le n$. The second statement follows in the same way by \cite[Proposition~4.5]{MR3237451}. 
\end{proof}

When we give the support of a coherent sheaf $\fF\in\Coh(\X)$ a stack structure, we get a good moduli space
\[\supp(\fF)\to \suppp(\fF):=\phi(\supp\fF),\]
and we call this algebraic space the \emph{good support} of $\fF$. We define $\Cohpgs(\X)$ as the full subcategory of $\Coh(\X)$ consisting of the coherent sheaves with  proper good support. That is, we let $\Cohpgs(\X)$ consist of coherent sheaves $\fF$ where $\suppp(\fF)\to\spec(A)$ is proper. There is then an inclusion of categories \[\Cohps(\X)\subseteq\Cohpgs(\X)\subseteq\Coh(\X).\] If $X\to \spec(A)$ is proper, then we have an equality $\Cohpgs(\X)=\Coh(\X)$, and if $\X=X$ is an algebraic space, then there is an equality $\Cohps(X)=\Cohpgs(X)$.

\begin{lemma}\label{lem:serre}\label{lem:hoppas}
Let $\phi\colon\X\to X$ be a good moduli space. 
\begin{enumerate}
\item For $\fF,\fG\in\Cohpgs(\X)$ we have:
\begin{enumerate}
\item  if $f\colon\fF\to\fG$ is a morphism,
then $\ker(f),\coker(f)\in\Cohpgs(\X)$.
\item $\fF\otimes_{\O_\X}\kern-0.25em\fG,\kern0.2em \shom_{\O_{\X}}\kern-0.1em(\fF,\fG)\in\Cohpgs(\X)$.
\end{enumerate}
\item $\phi_\ast\colon\Coh(\X)\to\Coh(X)$ restricts to a functor $\phi_\ast\colon\Cohpgs(\X)\to\Cohps(X)$.
\item $\phi^\ast\colon\Coh(X)\to\Coh(\X)$ restricts to a functor $\phi^\ast\colon\Cohps(X)\to\Cohpgs(\X)$.
\end{enumerate}
\end{lemma}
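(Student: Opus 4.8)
The plan is to reduce every assertion to elementary manipulations of supports, using the following standard input: $\phi$ is surjective and universally closed (Section~\ref{sec:gms}(2)); $\phi_\ast$ carries coherent sheaves to coherent sheaves (Section~\ref{sec:gms}(5)) and so does $\phi^\ast$ since $\X$ is noetherian; supports commute with pullback (Lemma~\ref{lem:comwithbc}); and a closed immersion is proper, so any closed subspace of a space proper over $\spec(A)$, given its reduced structure, is again proper over $\spec(A)$. The recurring mechanism is thus: if $Z\subseteq|X|$ is closed and contained in $\suppp(\fG)$ for some $\fG\in\Cohpgs(\X)$ (or in $\supp(\fG')$ for some $\fG'\in\Cohps(X)$), then $Z\to\spec(A)$ is proper.

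For part~(1)(a), $\ker(f)$ is a subsheaf of $\fF$ and $\coker(f)$ a quotient of $\fG$, hence $\supp(\ker f)\subseteq\supp(\fF)$ and $\supp(\coker f)\subseteq\supp(\fG)$; applying the universally closed map $\phi$ gives $\suppp(\ker f)\subseteq\suppp(\fF)$ and $\suppp(\coker f)\subseteq\suppp(\fG)$ as closed subsets of $X$, so both are proper over $\spec(A)$. For part~(1)(b), pulling back along a smooth surjection $u\colon U\to\X$ from a scheme (using that $u^\ast$ commutes with $\otimes_{\O_\X}$, and with $\shom_{\O_\X}(\fF,-)$ because $\fF$ is finitely presented and $u$ is flat) reduces the inclusions $\supp(\fF\otimes_{\O_\X}\fG)\subseteq\supp(\fF)$ and $\supp(\shom_{\O_\X}(\fF,\fG))\subseteq\supp(\fF)$ to the corresponding statements for coherent sheaves on a scheme, which follow from the pointwise identities $(\fF\otimes\fG)_x=\fF_x\otimes\fG_x$ and $\shom(\fF,\fG)_x=\hom(\fF_x,\fG_x)$, both of which vanish when $\fF_x=0$. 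Since $\fF\otimes_{\O_\X}\fG$ and $\shom_{\O_\X}(\fF,\fG)$ are coherent, applying $\phi$ as in part~(1)(a) finishes the argument.

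For part~(2), $\phi_\ast\fF$ is coherent, and I claim $\supp(\phi_\ast\fF)\subseteq\phi(\supp\fF)=\suppp(\fF)$. Put $U=X\setminus\phi(\supp\fF)$, open since $\phi$ is universally closed; then $\phi^{-1}(U)\cap\supp\fF=\emptyset$, so the coherent sheaf $\fF|_{\phi^{-1}(U)}$ has empty support and is zero. Because $\phi$ is quasi-compact and quasi-separated, $\phi_\ast$ commutes with restriction to the open $U$ (equivalently, $\phi^{-1}(U)\to U$ is again a good moduli space), whence $(\phi_\ast\fF)|_U=0$ and $\supp(\phi_\ast\fF)\subseteq\phi(\supp\fF)$. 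This is a closed subspace of $\suppp(\fF)$, hence proper over $\spec(A)$, so $\phi_\ast\fF\in\Cohps(X)$. For part~(3), $\phi^\ast\fG$ is coherent and $\supp(\phi^\ast\fG)=\phi^{-1}(\supp\fG)$ by Lemma~\ref{lem:comwithbc}; since $\phi$ is surjective, $\suppp(\phi^\ast\fG)=\phi(\phi^{-1}(\supp\fG))=\supp\fG$, which is proper over $\spec(A)$ as $\fG\in\Cohps(X)$, so $\phi^\ast\fG\in\Cohpgs(\X)$.

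The only step carrying any real content is the inclusion $\supp(\phi_\ast\fF)\subseteq\suppp(\fF)$ in part~(2); the rest is pure bookkeeping with supports. One small point to keep straight: ``proper'' here includes ``of finite type'', which is harmless since $X\to\spec(A)$ is of finite type and hence so is every closed subspace, and in any event a closed immersion into a space proper over $\spec(A)$ is proper over $\spec(A)$.
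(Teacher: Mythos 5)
Your proof is correct and follows essentially the same route as the paper: reduce everything to inclusions of supports, push forward along the universally closed map $\phi$, and use that a closed subspace of a space proper over $\spec(A)$ is proper. The only cosmetic differences are that you justify $(\phi_\ast\fF)|_U=0$ in part (2) via compatibility of $\phi_\ast$ with restriction to opens rather than by evaluating sections on the \'etale site, and that you spell out the reduction to schemes for the support inclusions in (1)(b), which the paper simply asserts.
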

\begin{proof} These all follow by general results on the support.
\begin{enumerate}
\item Good moduli spaces are universally closed, and closed immersions are proper. Moreover, compositions of proper are proper. Using these results we have:
\begin{enumerate}
\item\label{item:a} $\supp(\ker f)$ is closed in $\supp(\fF)$, so $\suppp(\ker f)$ is closed in $\suppp(\fF)$. Thus, the composition $\suppp(\ker f)\too\suppp(\fF)\to\spec(A)$ is proper. The case of $\supp(\coker f)$ follows analogously as it is closed in $\supp(\fG)$. 
\item $\supp(\fF\otimes_{\O_\X}\!\fG)$ and $\supp\bigl(\shom_{\O_{\X}}(\fF,\fG)\bigr)$ are closed in $\supp(\fF)\cap\supp(\fG)$. In particular, they are closed in $\supp(\fF)$, and the result follows as in (a).
\end{enumerate}
\item For any $\fF\in\Cohpgs(\X)$ we write $U=X\setminus\phi(\supp\fF)$ and $\mathscr{U}=\X\setminus\supp(\fF)$. 
Given $Y\to U$ in the \'etale site of $U$ we have that \[\phi_\ast\fF(Y)=\fF(Y\times_X\X)=\fF(Y\times_U\mathscr{U})=0,\] 
which implies that $(\phi_\ast\fF)|_U=0$. The support of $\phi_\ast\fF$ is the complement of the largest open subalgebraic space on which $\phi_\ast\fF$ vanishes, which implies that \[\supp(\phi_\ast\fF)\subseteq\phi(\supp\fF)=\suppp(\fF).\] 
Thus, the composition $\supp(\phi_\ast\fF)\hookrightarrow\suppp(\fF)\to\spec(A)$ is proper. 

\item Take $G\in\Cohps(X)$. By Lemma~\ref{lem:comwithbc}, we have that $\supp\phi^\ast G=\phi^{-1}(\supp G)$. Thus, 
\[\suppp(\phi^\ast G)=\phi(\supp\phi^\ast G)= \phi\bigl(\phi^{-1}(\supp G)\bigr)=\supp(G),\] 
and the result follows. 
\qedhere\end{enumerate}
\end{proof}

In the sequel, we will write $\Cohpgs(\widehat{\X}):=\varprojlim\kern-0.2em{}_n\kern0.1em\Cohpgs(\X_n)$.
\begin{proposition}\label{prop:gmsff2}
Suppose that $\X$ is a noetherian algebraic stack that admits a good moduli space $\phi\colon\X\to X$, such that $X\to \spec(A)$ is separated and of finite type. Then the completion functor ${\Cohpgs(\X)\to\Cohpgs(\widehat{\X})}$ is fully faithful.
\end{proposition}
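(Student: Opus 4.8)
The plan is to reduce the statement about $\Cohpgs$ to Grothendieck's existence theorem (equivalently, the fully faithfulness half of it) applied on the separated, finite-type algebraic space $X\to\spec(A)$, using the good moduli space morphism $\phi$ to transport Hom-computations upstairs and downstairs. Recall that for $\fF,\fG\in\Coh(\X)$ one has $\hom_{\O_\X}(\fF,\fG)=\Gamma\bigl(\X,\shom_{\O_\X}(\fF,\fG)\bigr)=\Gamma\bigl(X,\phi_\ast\shom_{\O_\X}(\fF,\fG)\bigr)$, the last equality because $\O_X\to\phi_\ast\O_\X$ is an isomorphism and $\phi_\ast$ commutes with taking global sections. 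So a morphism of sheaves on $\X$ is the same as a global section of a coherent sheaf on $X$, and we want to compare these with their completed analogues.

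First I would fix $\fF,\fG\in\Cohpgs(\X)$ and set $\shom=\shom_{\O_\X}(\fF,\fG)$, which lies in $\Cohpgs(\X)$ by Lemma~\ref{lem:serre}(1)(b); hence by Lemma~\ref{lem:serre}(2) the sheaf $\phi_\ast\shom$ lies in $\Cohps(X)$, i.e.\ it is a coherent sheaf on $X$ with proper support over $\spec(A)$. Next, I want the identification $\hom_{\O_\X}(\fF,\fG)=\Gamma\bigl(X,\phi_\ast\shom\bigr)$ to be compatible with reduction modulo $\m^{n+1}$. For the completed/formal side: a morphism $\widehat\fF\to\widehat\fG$ in $\Coh(\widehat\X)$ is, via Theorem~\ref{thm:defgagatyp}, a compatible system of morphisms $\fF_n\to\fG_n$ on the $\X_n$; and by the second assertion of Lemma~\ref{lem:exactnesshet}, $\shom_{\O_{\widehat\X}}(\widehat\fF,\widehat\fG)\cong\widehat{\shom}$. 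So
\[
\hom_{\O_{\widehat\X}}(\widehat\fF,\widehat\fG)=\Gamma\bigl(\widehat\X,\widehat{\shom}\bigr)=\varprojlim\nolimits_n\Gamma\bigl(\X_n,\shom\otimes\O_{\X_n}\bigr)=\varprojlim\nolimits_n\Gamma\bigl(X_n,\phi_\ast(\shom\otimes\O_{\X_n})\bigr),
\]
where the last step uses that $\O_{X_n}\to\phi_\ast\O_{\X_n}$ is an isomorphism (Section~\ref{sec:gms}(3)). By Lemma~\ref{lem:alg}, $\phi_\ast(\shom\otimes\O_{\X_n})=(\phi_\ast\shom)\otimes\O_{X_n}$, so the right-hand side is exactly $\varprojlim_n\Gamma\bigl(X_n,(\phi_\ast\shom)\otimes\O_{X_n}\bigr)=\Gamma\bigl(\widehat X,\widehat{\phi_\ast\shom}\bigr)$, the global sections of the completion of the coherent sheaf $\phi_\ast\shom$ on $X$. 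Thus the completion map
\[
\hom_{\O_\X}(\fF,\fG)\longrightarrow\hom_{\O_{\widehat\X}}(\widehat\fF,\widehat\fG)
\]
is identified with the completion map $\Gamma\bigl(X,\phi_\ast\shom\bigr)\to\Gamma\bigl(\widehat X,\widehat{\phi_\ast\shom}\bigr)$ on the algebraic space $X$.

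Finally, since $\phi_\ast\shom\in\Cohps(X)$ has proper support over $\spec(A)$ and $X\to\spec(A)$ is separated of finite type, Grothendieck's existence theorem for algebraic spaces (Knutson, as recalled before Lemma~\ref{lem:exactnesshet}) gives that $\Cohps(X)\to\Cohps(\widehat X)$ is an equivalence, in particular fully faithful; applying this to the pair $(\O_X,\phi_\ast\shom)$ shows $\Gamma(X,\phi_\ast\shom)\to\Gamma(\widehat X,\widehat{\phi_\ast\shom})$ is bijective. Hence $\hom_{\O_\X}(\fF,\fG)\to\hom_{\O_{\widehat\X}}(\widehat\fF,\widehat\fG)$ is bijective, which is precisely fully faithfulness of $\Cohpgs(\X)\to\Cohpgs(\widehat\X)$. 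The main obstacle I anticipate is the bookkeeping in the middle paragraph: one must check that all the identifications — $\hom$ as global sections of $\shom$, the commutation of $\phi_\ast$ with global sections and with $\otimes\O_{\X_n}$, and the passage to the inverse limit (exactness of $\phi_\ast$ and of completion, Lemma~\ref{lem:exactnesshet}, plus a Mittag-Leffler argument so that $\varprojlim$ behaves) — are mutually compatible and assemble into a commutative square, so that the completion map upstairs really is the one downstairs. Once that square is in place, the result is immediate from the classical theorem on $X$.
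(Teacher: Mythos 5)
Your proposal is correct and follows essentially the same route as the paper: form $\shom_{\O_\X}(\fF,\fG)\in\Cohpgs(\X)$, push it down to $\Cohps(X)$ via $\phi_\ast$, use the projection formula (Lemma~\ref{lem:alg}) and Lemma~\ref{lem:exactnesshet} to identify the completion map on Hom-sets with the global-sections completion map for $\phi_\ast\shom$ on $X$, and conclude by Knutson's existence theorem for the separated finite-type algebraic space $X$. The paper phrases this as a chain of equalities invoking full faithfulness of $\Cohps(X)\to\Cohps(\widehat X)$ applied to $(\O_X,\phi_\ast\shom)$ rather than as a commutative square, but the ingredients and their order are the same.
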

\begin{proof}
Given any $\fF,\fG\in\Cohpgs(\X)$, we need to show that we have an equality
\[\hom_{\O_\X}(\fF,\fG)=\hom_{\O_{\widehat{\X}}}(\widehat{\fF},\widehat{\fG}).\]
With $\fH=\shom_{\O_\X}(\fF,\fG)\in\Cohpgs(\X)$, we note that we have an equality of sets 
\[\hom_{\O_\X}(\fF,\fG)=\Gamma(\fH)=\Gamma(\phi_\ast\fH)=\hom_{\O_X}(\O_X,\phi_\ast\fH).\] 
Since  $X\to S$ is separated and of finite type, we have that $\Cohps(X)\to\Cohps(\widehat{X})$ is an equivalence \cite[Theorem~6.3]{MR0302647}. In particular, $\Cohps(X)\to\Cohps(\widehat{X})$ is fully faithful, so it follows that
\[\hom_{\O_\X}(\fF,\fG)=
\hom_{\O_X}(\O_X,\phi_\ast\fH)=\hom_{\O_{\widehat{X}}}\bigl(\O_{\widehat{X}},\widehat{\phi_\ast\fH}\bigr)=\Gamma\bigl(\widehat{\phi_\ast\fH}\bigr).\]
As the projection morphism is an isomorphism for good moduli spaces \cite[Proposition~4.5]{MR3237451} we have that 
$\phi_\ast(\fH\otimes\O_{\X_n})=(\phi_\ast\fH)\otimes\O_{X_n}$ for all $n$. 
Thus, 
\[\widehat{\phi_\ast}\widehat{\fH}=\varprojlim\kern-0.2em{}_n\kern0.1em \phi_\ast(\fH\otimes\O_{\X_n})= \varprojlim\kern-0.2em{}_n\kern0.1em(\phi_\ast\fH\otimes\O_{X_n})=\widehat{\phi_\ast\fH},\]
where $\widehat{\phi_\ast}\colon\Coh(\widehat{\X})\to\Coh(\widehat{X})$ denotes the induced map.
This implies that
\[\hom_{\O_\X}(\fF,\fG)=\Gamma\bigl(\widehat{\phi_\ast\fH}\bigr)= \Gamma\bigl(\widehat{\phi_\ast}\widehat{\fH}\bigr)=\Gamma\bigl(\widehat{\fH}\bigr)= \Gamma\bigl(\shom_{\O_\X}(\fF,\fG)^\wedge\bigr).\]
 Lemma~\ref{lem:generellt} now gives that 
\begin{align*}
\hom_{\O_\X}(\fF,\fG)&=
\Gamma\bigl(\shom_{\O_\X}(\fF,\fG)^\wedge\bigr)=\Gamma\bigl(\shom_{\O_{\widehat{\X}}}(\widehat{\fF},\widehat{\fG})\bigr)=\hom_{\O_{\widehat{\X}}}(\widehat{\fF},\widehat{\fG}).
\qedhere\end{align*}
\end{proof}

\begin{theorem}\label{thm:cohpgs}
Let $A$ be a complete local noetherian ring, and let $\X$ be a noetherian algebraic stack over $\spec(A)$. Suppose that $\X$ has the resolution property and that $\X$ admits a good moduli space $\phi\colon\X\to X$, where $X\to \spec(A)$ is separated and of finite type. Then $\Cohpgs(\X)\to\Cohpgs(\widehat{\X})$ is an equivalence of categories.
\end{theorem}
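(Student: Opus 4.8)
The plan is as follows. Full faithfulness of the completion functor is already Proposition~\ref{prop:gmsff2}, so the entire content of the theorem is essential surjectivity: given a compatible system $(\fF_n)\in\Cohpgs(\widehat{\X})$, I must algebraize it. I would do this by a dévissage. First I would construct a surjection $\widehat{\fG}\twoheadrightarrow(\fF_n)$ in $\Cohpgs(\widehat{\X})$ whose source is the completion of a \emph{genuine} coherent sheaf $\fG\in\Cohpgs(\X)$ --- this is the crucial nonformal step --- then pass to the kernel, resolve it the same way, and glue the resulting two-term presentation into an honest morphism on $\X$ by full faithfulness, whose cokernel then completes to $(\fF_n)$.

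For the construction of $\fG$ and the surjection I would proceed thus. Using the resolution property of $\X$ (hence of $\X_0$), pick a locally free sheaf $\E$ on $\X$ together with a surjection $q\colon\E\otimes\O_{\X_0}\twoheadrightarrow\fF_0$. Since $\E$ is locally free, $(\shom_{\O_\X}(\E,\fF_n))$ is again a compatible system, with support contained in that of $(\fF_n)$, so it lies in $\Cohpgs(\widehat{\X})$; by Lemma~\ref{lem:alg} and Lemma~\ref{lem:serre}(2) its pushforward $(\mathcal{M}_n):=(\phi_\ast\shom_{\O_\X}(\E,\fF_n))$ is a compatible system in $\Cohps(\widehat{X})$. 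Because $X\to\spec(A)$ is separated and of finite type, Grothendieck's existence theorem on the algebraic space $X$ \cite[Theorem~6.3]{MR0302647} algebraizes it to some $\mathcal{M}\in\Cohps(X)$. I would then set $\fG:=\E\otimes\phi^\ast\mathcal{M}$, which lies in $\Cohpgs(\X)$ since $\phi^\ast\mathcal{M}$ does (Lemma~\ref{lem:serre}(3)) and tensoring with $\E$ only shrinks the support, and define $\beta\colon\widehat{\fG}\to(\fF_n)$ levelwise as $\id_\E$ tensored with the adjunction counit $\phi^\ast\phi_\ast\shom_{\O_\X}(\E,\fF_n)\to\shom_{\O_\X}(\E,\fF_n)$, followed by the evaluation $\E\otimes\shom_{\O_\X}(\E,\fF_n)\to\fF_n$.

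The point that makes this work --- and the step I expect to be the real obstacle --- is surjectivity of $\beta$. It suffices to check that $\beta_0$ is surjective, for then, $\X_0\hookrightarrow\X_n$ being a nilpotent thickening, $\coker(\beta_n)\otimes\O_{\X_0}=\coker(\beta_0)=0$ forces $\coker(\beta_n)=0$ for every $n$. Now $q$ is a global homomorphism $\E\otimes\O_{\X_0}\to\fF_0$, i.e. a global section of $\shom_{\O_\X}(\E,\fF_0)$, and under $\phi_\ast$ this is the same as a section of $\mathcal{M}_0$; hence the morphism $\O_{\X_0}\to\shom_{\O_\X}(\E,\fF_0)$ determined by $q$ factors through the counit $\phi^\ast\mathcal{M}_0\to\shom_{\O_\X}(\E,\fF_0)$, and therefore $q$ factors through $\beta_0$, which is thus surjective. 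The naive attempt of taking $\fG$ to be a pullback $\phi^\ast(\,\cdot\,)$ fails precisely because the counit need not be surjective on the nose; tensoring with the resolving bundle $\E$ and exploiting that $q$ is a global section is what repairs this.

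To finish, $\ker\beta$ is a subobject of $\widehat{\fG}$ in the abelian category $\Coh(\widehat{\X})\simeq\varprojlim_n\Coh(\X_n)$ (Theorem~\ref{thm:defgagatyp}); its terms are subsheaves of the $\fG\otimes\O_{\X_n}$, so it again lies in $\Cohpgs(\widehat{\X})$. Repeating the construction above on $\ker\beta$ gives $\fG'\in\Cohpgs(\X)$ and a surjection $\widehat{\fG'}\twoheadrightarrow\ker\beta$; composing with $\ker\beta\hookrightarrow\widehat{\fG}$ yields $\alpha\colon\widehat{\fG'}\to\widehat{\fG}$ with image $\ker\beta$. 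By Proposition~\ref{prop:gmsff2}, $\alpha=\widehat{a}$ for a unique $a\colon\fG'\to\fG$ in $\Cohpgs(\X)$; then $\coker(a)\in\Cohpgs(\X)$ by Lemma~\ref{lem:serre}(1), and exactness of completion (Lemma~\ref{lem:exactnesshet}) gives $\widehat{\coker(a)}=\coker(\widehat{a})=\coker\alpha=\widehat{\fG}/\ker\beta=(\fF_n)$, so $(\fF_n)$ is algebraizable. Beyond the surjectivity argument, the only points requiring a little care are that the pushforward system is a compatible system with proper support (handled by Lemmas~\ref{lem:alg} and~\ref{lem:serre}) and that $\ker\beta$ stays inside $\Cohpgs(\widehat{\X})$; everything else is routine.
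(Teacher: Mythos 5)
Your proposal is correct and follows essentially the same route as the paper's proof: resolve $\fF_0$ by a locally free $\E$ on $\X$, algebraize the pushforward system $\phi_\ast\shom(\E_n,\fF_n)$ on $X$ via Knutson's existence theorem, form $\E\otimes\phi^\ast(-)$, prove surjectivity at level $0$ using the global section coming from the chosen surjection plus Nakayama, and finish by a two-term presentation combined with full faithfulness and exactness of completion. The only cosmetic difference is that the paper obtains $\E$ by resolving $j_\ast\fF_0$ on $\X$ rather than invoking the resolution property of $\X_0$ directly, which amounts to the same thing.
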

\begin{proof}
From Proposition~\ref{prop:gmsff2} we have that the functor $\Cohpgs(\X)\to\Cohpgs(\widehat{\X})$ is fully faithful.
We will now show that this functor is also essentially surjective. That is, we will show that every $\mathfrak{F}\in\Cohpgs(\widehat{\X})$ is algebraizable. Such a coherent sheaf $\ffF$ is equivalent to a compatible system $(\fF_n)$ with $\fF_n\in\Cohpgs(\X_n)$ for each $n$. 

Let $j_0\colon\X_0\to\X$ denote the inclusion. Then $j_\ast\fF_0$ is a sheaf on $\X$ and by the resolution property there is a locally free sheaf $\E\in\Coh(\X)$ that surjects onto $j_\ast\fF_0$. Letting ${\E_n=\E\otimes\O_{\X_n}}$, we see for any $m\le n$ that 
\[\shom(\E_n,\fF_n)\otimes\O_{\X_m}=(\E_n^\ast\otimes\fF_n)\otimes\O_{\X_m}= \E_m^\ast\otimes\fF_m=\shom(\E_m,\fF_m).\]
Thus, $\bigl(\shom(\E_n,\fF_n)\bigr)$ is a compatible system of coherent sheaves on $(\X_n)$, and Lemma~\ref{lem:alg} then implies that $\bigl(\phi_\ast\shom(\E_n,\fF_n)\bigr)$ is a compatible system on $(X_n)$. As $X\to S$ is separated and of finite type this system is algebraizable \cite[Theorem~6.3]{MR0302647}. That is, there is a coherent sheaf $G\in\Cohps(X)$ such that $G\otimes\O_{X_n}=\phi_\ast\shom(\E_n,\fF_n)$ for all $n$. 

We now let $\fG=\phi^\ast G$, so that 
\[\fG_n:=\fG\otimes\O_{\X_n}=\phi^\ast(G\otimes\O_{X_n})=\phi^\ast\phi_\ast\shom(\E_n,\fF_n),\] 
and we consider the compatible system $(\E_n\otimes\fG_n)$ on $(\X_n)$. 
By combining the pullback-push\-forward-adjointness with the hom-tensor-adjointness we get canonical maps 
\[\E_n\otimes\fG_n=\E_n\otimes\phi^\ast\phi_\ast\shom(\E_n,\fF_n)\to\fF_n\]
for all $n$ \cite[1.6.2]{2013arXiv1306.5418G}. 

We started by choosing a surjection $\E\twoheadrightarrow j_\ast\fF_0$ giving a surjection $s\colon\E_0\twoheadrightarrow\fF_0=j^\ast j_\ast\fF_0$. It follows that $\E_0\otimes\fG_0\twoheadrightarrow\fF_0$ is surjective. Indeed, $\fG_0$ has a global section given by $s$, and the induced composition $\E_0\to\E_0\otimes\fG_0\to\fF_0$ equals the surjection $s\colon\E_0\twoheadrightarrow\fF_0$. By Nakayama's lemma it therefore follows that $\E_n\otimes\fG_n\twoheadrightarrow\fF_n$ is surjective for all $n$. Note that $\E_n\otimes\fG_n=\E_n\otimes\phi^\ast\phi_\ast\shom(\E_n,\fF_n)$ is a coherent sheaf with proper good support by Lemma~\ref{lem:hoppas}. 
As  
\[\varprojlim\kern-0.2em{}_n\kern0.1em (\E_n\otimes\fG_n)= \varprojlim\kern-0.2em{}_n\kern0.1em \bigl((\E\otimes\fG)\otimes\O_{\X_n}\bigr) = \widehat{\E\otimes\fG},\]
we get a surjection $\widehat{\fG'}\to\mathfrak{F}$ from an algebraizable sheaf, where $\fG'=\E\otimes\fG$. Applying the same approach to the kernel of this surjection we get a presentation
\[\widehat{\fG''}\to\widehat{\fG'}\to\mathfrak{F}\to0.\]
By the full faithfulness of $\Cohpgs(\X)\to\Cohpgs(\widehat{\X})$ we have that the morphism $\widehat{\fG''}\to\widehat{\fG'}$ corresponds to a morphism $\fG''\to\fG'$, and we let $\fF=\coker(\fG''\to\fG')$. The exactness of the completion functor stated in Lemma~\ref{lem:generellt} now shows that
\[\mathfrak{F}=\coker\bigl(\widehat{\fG''}\to\widehat{\fG'}\bigr)= \coker\bigl(\fG''\to\fG'\bigr)^\wedge=\widehat{\fF}\] 
is algebraizable.
\end{proof}

\begin{remark}
When $X\to\spec(A)$ is proper we have that $\Cohpgs(\X)=\Coh(\X)$. Thus, Theorem~\ref{thm:coh} from the introduction follows as an immediate corollary of Theorem~\ref{thm:cohpgs}.

\end{remark}

\section{The good Hilbert functor is an algebraic space}\label{sec:main}
In this section we give a definition of the good Hilbert functor that we introduced earlier, and give a proof of Theorem~\ref{thm:main}. 
\begin{definition}\label{def:main}
Let $\X$ be an algebraic stack over a scheme $S$, and suppose that $\X$ admits a good moduli space $\phi\colon\X\to X$, where $X\to S$ is separated. The \emph{good Hilbert functor} is the functor $\GoodHilb_\X\colon\sch_S^{\operatorname{op}}\to\set$ that sends an $S$-scheme $T$ to the set of closed substacks ${\Z\subseteq\X_T}$ such that the composition $\Z\to\X_T\to T$ is flat and finitely presented, and the composition $Z:=\phi_T(\Z)\to X_T\to T$ is proper. 
Given a morphism $f\colon T'\to T$ of $S$-schemes, we define 
\[\GoodHilb_\X(f)\colon\GoodHilb_\X(T)\to\GoodHilb_\X(T')\]
by $(\Z\to\X_{T})\mapsto(\Z\times_{T}T'\to\X_{T'})$. 
\end{definition}
Throughout this section we will assume that $\X$ satisfies the assumptions stated in the definition above. We will now apply Artin's criterion that we discussed in Section~\ref{sec:artin} to show that the good Hilbert functor is an algebraic space.  
\begin{lemma}\label{lem:ssistas}
The functor $H_\X\colon\sch_S^{\operatorname{op}}\to\set$ defined by $T\mapsto\{\text{closed substacks }\Z\subseteq\X_T\}$ is a sheaf in the fpqc topology.
\end{lemma}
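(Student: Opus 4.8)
The plan is to show that $H_\X$ satisfies descent along fpqc covers, which reduces — since being a sheaf for the fpqc topology is equivalent to being a sheaf for the Zariski topology together with descent along faithfully flat quasi-compact maps of affines — to the following two statements: (i) for a Zariski cover, closed substacks glue, which is immediate because being a closed substack is local on the target; and (ii) for a faithfully flat quasi-compact morphism $\spec(B)\to\spec(A)$ of affine $S$-schemes, a closed substack $\Z\subseteq\X_B$ equipped with a descent datum over $\spec(B\otimes_A B)$ descends to a closed substack of $\X_A$. So the real content is (ii).

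For (ii), I would translate closed substacks into ideal sheaves: a closed substack $\Z\subseteq\X_T$ is the same datum as a quasi-coherent ideal sheaf $\I_\Z\subseteq\O_{\X_T}$. Giving a descent datum on $\Z$ relative to $\spec(B)\to\spec(A)$ amounts to giving an isomorphism of the two pullbacks of $\I_\Z$ to $\X_{B\otimes_A B}$ satisfying the cocycle condition, and compatibly these pullbacks are again ideal sheaves (pullback of a quasi-coherent ideal along the flat maps $\X_{B\otimes_A B}\to\X_B$ stays an ideal). Now I invoke fpqc descent for quasi-coherent sheaves on the algebraic stack $\X_A$: since $\X_B=\X_A\times_{\spec(A)}\spec(B)\to\X_A$ is faithfully flat and quasi-compact (it is the base change of $\spec(B)\to\spec(A)$ along $\X_A\to\spec(A)$), the category $\QCoh(\X_A)$ is equivalent to the category of quasi-coherent sheaves on $\X_B$ with descent datum. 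This descends $\I_\Z$ to a quasi-coherent sheaf $\I$ on $\X_A$ together with a map $\I\to\O_{\X_A}$ whose pullback to $\X_B$ is the inclusion $\I_\Z\hookrightarrow\O_{\X_B}$. Faithful flatness of $\X_B\to\X_A$ then forces $\I\to\O_{\X_A}$ to be injective (a map of quasi-coherent sheaves is a monomorphism if and only if it is so after faithfully flat pullback, since the kernel commutes with flat pullback), so $\I$ defines a closed substack $\Z_A\subseteq\X_A$ whose base change to $\spec(B)$ is $\Z$, and this $\Z_A$ is unique by the same faithful flatness argument. That the assignment is functorial and compatible with the cocycle condition is formal.

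One technical point worth being careful about: I must check that the closed immersion condition is preserved, i.e. that $\I\hookrightarrow\O_{\X_A}$ really cuts out a closed substack and not merely an arbitrary quasi-coherent subsheaf; this is automatic because a quasi-coherent ideal sheaf on any algebraic stack defines a closed substack (this can be checked smooth-locally on $\X_A$, where it reduces to the scheme case). I should also note that quasi-coherence of $\I$ as a sheaf on the lisse-étale site is exactly what fpqc descent for $\QCoh$ delivers, so no separate argument is needed there.

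The main obstacle — though it is more a matter of citing the right statement than of genuine difficulty — is fpqc descent for quasi-coherent sheaves on an algebraic stack along a flat quasi-compact representable (indeed affine-base-changed) morphism; with the conventions of the paper ($\X$ quasi-compact, quasi-separated, with separated diagonal) this is standard, e.g. it follows from the fact that $\QCoh$ is a stack for the fpqc topology on schemes combined with a smooth presentation of $\X$, or one can quote it directly. Everything else is bookkeeping: reducing fpqc to Zariski plus affine faithfully flat descent, and checking that ideal sheaves and injectivity are preserved under faithfully flat base change.
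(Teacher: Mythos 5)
Your argument is correct, but it is organized differently from the paper's. You first reduce the fpqc sheaf condition to the Zariski case plus a single faithfully flat morphism of affines, then identify closed substacks with quasi-coherent ideal sheaves and invoke descent for $\QCoh$ on the algebraic stack $\X_A$ along the affine faithfully flat morphism $\X_B\to\X_A$, checking afterwards that injectivity of $\I\to\O_{\X_A}$ and the closed-substack property survive descent. The paper instead keeps an arbitrary fpqc cover $\{T_i\to T\}$ and chooses a presentation $U\to\X$ with $R=U\times_\X U$; a closed substack of $\X_T$ is then an equalizer datum of closed subschemes of $U_T$ and $R_T$, the rows of the resulting $3\times 3$ diagram are exact by fpqc descent of closed subschemes of schemes (Stacks Project Tag 023T), and the columns are exact by the construction of quasi-coherent sheaves on stacks, so a diagram chase finishes the proof. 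The two routes rest on the same underlying fact (fpqc descent of quasi-coherent modules); the paper's version pushes the stackiness into the presentation and only ever cites descent for schemes, which is slightly more self-contained given the paper's conventions, whereas yours quotes descent for $\QCoh$ on stacks as a black box and in exchange avoids the diagram chase and makes the role of the ideal sheaf explicit. One small point to make sure of in your write-up: for a set-valued functor the "descent datum" on $\Z$ is simply the condition that the two pullbacks to $\X_{B\otimes_AB}$ coincide as closed substacks, which gives the identity as the canonical gluing isomorphism on the ideal sheaves; you use this implicitly and it is fine, but it is worth stating so that the cocycle condition is visibly automatic.
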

\begin{proof}
%
Let $\{T_i\to T\}$ be an fpqc cover of an $S$-scheme $T$, and write $T_{ij}=T_i\times_TT_j$.
 We need to show the exactness of the diagram
\[\textstyle H_\X(T)\to\prod_i H_\X({T_i})\rightrightarrows\prod_{i,j} H_\X({T_{ij}}).\]
Let $U\to\X$ be a presentation where $U$ is a scheme, and write $R=U\times_{\X}U$. 
Then we have a commutative diagram
\[\xymatrix{
H_{R}(T)\ar@{^(->}[r]&
\prod_iH_{R}(T_i)\ar@<-.5ex>[r] \ar@<.5ex>[r]& 
\prod_{i,j}H_{R}(T_{ij})\\
H_U(T)\ar@<-.5ex>[u] \ar@<.5ex>[u]\ar@{^(->}[r]&
\prod_iH_{U}(T_i)\ar@<-.5ex>[u] \ar@<.5ex>[u]\ar@<-.5ex>[r] \ar@<.5ex>[r]&
\prod_{i,j}H_{U}(T_{ij})\ar@<-.5ex>[u] \ar@<.5ex>[u]\\
H_\X(T)\ar@{^(->}[u]\ar[r]&
\prod_iH_\X({T_i})\ar@<-.5ex>[r] \ar@<.5ex>[r]\ar@{^(->}[u]&
\prod_{i,j}H_\X({T_{ij}})\ar@{^(->}[u]}\]
where all columns are exact by the construction of quasi-coherent sheaves on stacks, and the top two rows are exact by \cite[Tag~023T]{stacks-project}. 
That the bottom row is exact now follows by a simple diagram chase. 
%
\end{proof}

\begin{lemma}\label{lem:sheaf}
The functor $\GoodHilb_\X$ is a sheaf in the fpqc topology.
\end{lemma}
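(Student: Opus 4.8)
The plan is to deduce the sheaf property of $\GoodHilb_\X$ from that of the larger functor $H_\X$ established in Lemma~\ref{lem:ssistas}. Let $\{T_i\to T\}$ be an fpqc cover of an $S$-scheme $T$, with $T_{ij}=T_i\times_T T_j$. Since $\GoodHilb_\X$ is a subfunctor of $H_\X$, and $H_\X$ is already known to be a sheaf, the separatedness axiom is immediate: a family $(\Z_i)\in\prod_i\GoodHilb_\X(T_i)$ with matching restrictions glues to at most one closed substack $\Z\subseteq\X_T$. It therefore remains only to check that this $\Z$, which exists as a closed substack by Lemma~\ref{lem:ssistas}, actually lies in $\GoodHilb_\X(T)$; that is, that the two defining conditions — $\Z\to T$ flat and finitely presented, and $Z:=\phi_T(\Z)\to T$ proper — descend along the fpqc cover.

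For the first condition, I would note that flatness and finite presentation are fpqc-local on the base: forming the diagram $\Z_i=\Z\times_T T_i\to\X_{T_i}$, each $\Z_i\to T_i$ is flat and finitely presented by hypothesis, and since $\{T_i\to T\}$ is an fpqc cover the composite $\Z\to T$ is flat and finitely presented as well (one checks this after a smooth presentation $U\to\X$, reducing to the corresponding statement for schemes/algebraic spaces, which is standard, cf.\ \cite{stacks-project}). For the second condition, the key point is that $\phi$ commutes with base change: by item~(3) of Section~\ref{sec:gms}, $\phi_{T_i}\colon\X_{T_i}\to X_{T_i}$ is a good moduli space, and since good moduli spaces are categorical for maps to algebraic spaces and their formation commutes with flat base change, the good moduli space of $\Z_i$ is $Z_i=Z\times_T T_i$. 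Now $Z_i\to T_i$ is proper for each $i$, and properness (separated, finite type, universally closed) is fpqc-local on the base, so $Z\to T$ is proper. Hence $\Z\in\GoodHilb_\X(T)$, and the equalizer diagram
\[\textstyle\GoodHilb_\X(T)\to\prod_i\GoodHilb_\X(T_i)\rightrightarrows\prod_{i,j}\GoodHilb_\X(T_{ij})\]
is exact.

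The only subtlety — and the step I would be most careful about — is the claim that the good moduli space of the glued substack $\Z$ is obtained by gluing the good moduli spaces $Z_i$ of the $\Z_i$, i.e.\ that $\suppp$ and, more precisely, the formation of the good moduli space of a closed substack commutes with the fpqc base change $T_i\to T$. This follows by combining Lemma~4.14 and Proposition~4.7(i) of \cite{MR3237451} (items (3) and (4) of Section~\ref{sec:gms}): $\Z\to Z$ is a good moduli space, its base change $\Z_i\to Z_i$ is a good moduli space, and by uniqueness of good moduli spaces this $Z_i$ agrees with $\phi_{T_i}(\Z_i)$; the descent datum on the $Z_i$ coming from the cocycle condition on the $\Z_i$ is then effective because $Z\to T$ is (a posteriori) separated, or more simply because $Z_i=Z\times_T T_i$ already exhibits the descent. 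Everything else is a routine application of fpqc descent for the properties "flat", "finitely presented", and "proper".
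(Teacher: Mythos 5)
Your proposal is correct and follows essentially the same route as the paper: glue the closed substack via Lemma~\ref{lem:ssistas} and then use that flatness, finite presentation, and properness are fpqc-local on the base. The only difference is that you explicitly justify the compatibility $Z_i\cong Z\times_T T_i$ of good moduli spaces with base change (via items (1), (3), and (4) of Section~\ref{sec:gms}), a point the paper's one-line proof leaves implicit; your version is the more careful one, but the argument is the same.
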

\begin{proof}
As the properties of being flat, finitely presented, and proper are fpqc local on the base \cite[Tags~041W, 041V, 0422]{stacks-project}, the result follows from Lemma~\ref{lem:ssistas}.
\end{proof}

\begin{lemma}\label{lem:sista}
Let $\{\spec A_j\}_{j\in J}$ be an inverse system of affine schemes over $S$ with limit ${\varprojlim\kern-0.15em{}_j\kern0.0em\spec(A_j)=\spec(A)}$. Then, the natural map
\[{\Psi\colon\textstyle\varinjlim\kern-0.15em{}_j\kern0.0em}\GoodHilb_\X(\spec A_j)\to\GoodHilb_\X(\spec A)\]
is a bijection of sets.
\end{lemma}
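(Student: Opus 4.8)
The plan is to reduce the statement to the corresponding fact for the bigger fibered category $\morj_\X$, which we know is limit-preserving by Proposition~\ref{prop:bootstrap} (condition 2 holds there by standard arguments, cf.\ the reference to \cite[Appendix~B]{MR3272071}), and then check that the extra conditions cutting out $\GoodHilb_\X$ inside $\morj_\X$ — namely that $\Z\subseteq\X_T$ is a \emph{closed} substack with $Z:=\phi_T(\Z)\to T$ proper — are also compatible with filtered limits of affine schemes. Concretely, I would first prove injectivity of $\Psi$: if two closed substacks $\Z,\Z'\subseteq\X_A$ come from levels in the system and agree after some base change — or rather, if an element of the left-hand colimit maps to the same thing on the right — then since $\X$ is of finite presentation over $S$ (it is of finite type over a noetherian base, hence finitely presented), closed substacks of $\X_A$ given by finitely presented ideals, together with morphisms between them, descend to some finite level by the standard limit formalism \cite[Tag~01ZM, Tag~081D]{stacks-project}; this handles injectivity.

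For surjectivity, start with a closed substack $\Z\subseteq\X_A$ with $\Z\to\spec(A)$ flat and finitely presented and $Z=\phi_A(\Z)\to\spec(A)$ proper. By limit preservation for $\morj_\X$ (equivalently, by the standard approximation results for finitely presented closed immersions and for flatness \cite[Tag~05LB]{stacks-project}), there is an index $j$ and a closed substack $\Z_j\subseteq\X_{A_j}$, flat and finitely presented over $\spec(A_j)$, pulling back to $\Z$. The remaining point is to arrange, after possibly increasing $j$, that the good moduli space $Z_j:=\phi_{A_j}(\Z_j)$ is proper over $\spec(A_j)$. Here I would use that formation of the good moduli space commutes with the base changes $\spec(A)\to\spec(A_j)$ and $\spec(A_{j'})\to\spec(A_j)$ (Section~\ref{sec:gms}(3) applied to the closed substack, via Section~\ref{sec:gms}(4)), so $Z_j\times_{\spec(A_j)}\spec(A) = Z$ and likewise at finite levels. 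Since $Z_j\to\spec(A_j)$ is of finite type (Section~\ref{sec:gms}(7), as $\Z_j\to\spec(A_j)$ is of finite type), and properness = finite type + separated + universally closed, it suffices to propagate separatedness and universal closedness. Both of these are constructible/limit-stable properties of finitely presented morphisms of schemes (or algebraic spaces): by \cite[Tag~081F]{stacks-project} a finitely presented morphism whose base change to $\spec(A)=\varprojlim\spec(A_j)$ is proper is already proper after pulling back to some $\spec(A_{j'})$. Applying this to $Z_j\to\spec(A_j)$ (noting $Z_j$ is of finite presentation over $\spec(A_j)$ since $X\to S$ is of finite type and $Z_j\hookrightarrow X_{A_j}$ is a finitely presented closed immersion — the good moduli space of a finitely presented closed substack) gives an index $j'\ge j$ with $Z_{j'}\to\spec(A_{j'})$ proper. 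Then $\Z_{j'}$ is the desired element of $\GoodHilb_\X(\spec A_{j'})$ mapping to $\Z$.

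The main obstacle I anticipate is the bookkeeping around the good moduli space in the limit: one must know that $\phi$ and its formation commute with all the base changes in play and that the resulting $Z_j$'s themselves form an inverse system with $\varprojlim Z_j = Z$ and $Z_{j'}\times_{\spec A_{j'}}\spec A_j \cong$ the good moduli space of $\Z_{j'}\times_{\spec A_{j'}}\spec A_j$ — this is exactly Section~\ref{sec:gms}(3)+(4), but it needs to be invoked carefully so that $Z_j$ is finitely presented over $\spec(A_j)$ and the approximation result \cite[Tag~081F]{stacks-project} genuinely applies. Everything else is the routine limit formalism for finitely presented algebraic stacks, which is why I would phrase the argument as "the conditions carving $\GoodHilb_\X$ out of $\morj_\X$ are finitely presented and limit-stable," rather than redoing the approximation by hand.
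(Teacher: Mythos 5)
Your proposal is correct and follows essentially the same route as the paper: approximate $\Z$ by a finitely presented closed substack $\Z_j\subseteq\X_{A_j}$ at some finite level, use compatibility of good moduli spaces with base change (Section~\ref{sec:gms}(3)+(4)) to identify the $Z_m$'s, and then invoke the standard fact that flatness, closed immersion, and properness of finitely presented morphisms descend to a finite level of the inverse system. The paper cites \cite[Appendix~B, Propositions~(B.2)--(B.3)]{MR3272071} where you cite the corresponding Stacks project tags, but the content is identical, and the bookkeeping point you flag about the formation of $Z_j$ is exactly the step the paper handles via uniqueness of good moduli spaces.
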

\begin{proof}
Given a stack $\Z_i\to\spec(A_i)$, we write $\Z_i|_{A_j}:=\Z_i\times_{\spec(A_i)}\spec(A_j)$ for all $j\ge i$. A basic fact of direct limits is then that the set $\varinjlim\kern-0.15em{}_j\kern0.0em\GoodHilb_\X(\spec A_j)$ is in bijection with the set of equivalence classes of \[\{(A_j,\Z_j)\mid \Z_j\in\GoodHilb_\X(\spec A_j), j\in J\}\] under the relation $(A_i,\Z_i)\sim(A_j,\Z_j)$ if $\Z_i|_{A_m}=\Z_j|_{A_m}$ for some $m\ge i,j$. 
The map $\Psi$ sends an equivalence class $[(A_j,\Z_j)]$ to ${\Z_j\times_{\spec(A_j)}\spec(A)\in\GoodHilb_\X(\spec A)}$. 

On the other hand, for a closed substack $\Z\in\GoodHilb_\X(\spec A)$ we have by \cite[Appendix~B, Proposition~(B.2)]{MR3272071} that there is an index $i\in J$ with an algebraic stack $\Z_i$ of finite presentation over $\spec(A_i)$, together with a morphism $\Z_i\to\X_{A_i}$, such that ${\Z_i\times_{\spec(A_i)}\spec(A)=\Z}$. 
The stack $\Z_i$ might not be an element of $\GoodHilb_\X(\spec A_i)$, but [op.\,cit., Proposition~(B.3)] shows that ${\Z_j:=\Z_i|_{A_j}}$ is a closed substack of $\X_{A_j}$ that is flat and finitely presented over $\spec(A_j)$ for $j\gg i$. Moreover, fixing $j$ we have that $\Z_j$ has a good moduli space $Z_j$ and ${\Z_m=\Z_j|_{A_m}}$ has a good moduli space $Z_m=Z_j|_{A_m}$ for all $m\ge j$. By applying [op.\,cit., Proposition~(B.3)] again, we have that $Z_m\to\spec(A_m)$ is proper for $m\gg j$. Thus, ${\Z_m\in\GoodHilb_\X(\spec A_m)}$ for $m\gg i$, and the map $\Psi$ is invertible with inverse $\Z\mapsto[(A_m,\Z_m)]$. 
\end{proof}

\begin{lemma}\label{lem:fixarallt}
Let $\morj_\X$ denote the fibered category defined in Proposition~\ref{prop:bootstrap}. The inclusion $\GoodHilb_\X\to \morj_\X$ is formally \'etale, that is, for every commutative ring $A$ with nilpotent ideal $I$, and for all  morphisms $\spec(A/I)\to\GoodHilb_\X$ and $\spec(A)\to\morj_\X$ such that the diagram
\[\xymatrix{\spec(A/I)\ar[r]\ar@{^(->}[d]&\GoodHilb_\X\ar[d]\\
\spec(A)\ar[r]\ar@{-->}[ur]&\morj_\X}\]
commutes, there exists a unique morphism $\spec(A)\dashrightarrow\GoodHilb_\X$ filling in the diagram.
\end{lemma}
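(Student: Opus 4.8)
The plan is to unwind the lifting problem, observe that the morphism $\Z\to\X_A$ underlying the given map $\spec(A)\to\morj_\X$ is automatically a closed immersion, and then check that the closed substack $\Z\subseteq\X_A$ so obtained satisfies the two remaining conditions of Definition~\ref{def:main}. Uniqueness of the lift is then immediate: an object of $\GoodHilb_\X(\spec(A))$ is merely a closed substack, with no automorphisms, so the only candidate for the lift is $\Z$ itself. To fix notation, the map $\spec(A)\to\morj_\X$ is a morphism $g\colon\Z\to\X_A$ of algebraic stacks with $\Z\to\X_A\to\spec(A)$ flat and locally of finite presentation, and the map $\spec(A/I)\to\GoodHilb_\X$ is a closed substack $\Z_0\subseteq\X_{A/I}$ with $\Z_0\to\spec(A/I)$ flat and finitely presented and with good moduli space $Z_0:=\phi_{A/I}(\Z_0)$ proper over $\spec(A/I)$; commutativity of the square identifies $\Z_0$, together with its closed immersion into $\X_{A/I}$, with the base change of $g$ along the nilpotent closed immersion $\spec(A/I)\hookrightarrow\spec(A)$. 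Since $I$ is nilpotent, $\X_{A/I}\hookrightarrow\X_A$ and $\Z_0\hookrightarrow\Z$ are nilpotent thickenings.

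First I would show that $g$ is a closed immersion. This is smooth-local on the target, so after choosing a smooth presentation $U\to\X_A$ by a scheme it suffices to check that $\Z_U:=\Z\times_{\X_A}U\to U$ is a closed immersion of algebraic spaces. Its base change along the nilpotent thickening $U_0:=U\times_{\spec(A)}\spec(A/I)\hookrightarrow U$ is $\Z_0\times_{\X_{A/I}}U_0\to U_0$, which is a closed immersion since $\Z_0\hookrightarrow\X_{A/I}$ is one. It is a standard fact that closed immersions are detected on nilpotent thickenings of the target: working Zariski-locally on $U$, one first uses that a scheme or algebraic space with an affine nilpotent thickening is affine, and then that a ring homomorphism which becomes surjective after killing a nilpotent ideal was already surjective (if $I^{n}=0$ and $M$ denotes the cokernel, then $M=IM=\cdots=I^{n}M=0$). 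Hence $\Z_U\to U$, and therefore $g$, is a closed immersion, so $\Z\subseteq\X_A$ is a closed substack. As $\X_A$ is quasi-compact and quasi-separated, so is $\Z$, and consequently $\Z\to\spec(A)$ is finitely presented, not merely locally so.

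Next I would treat the good moduli space. By \cite[Lemma~4.14]{MR3237451} the closed substack $\Z\subseteq\X_A$ has a good moduli space $\Z\to Z:=\phi_A(\Z)$, and by \cite[Proposition~4.7(i)]{MR3237451} its base change $\Z_0\to Z\times_{\spec(A)}\spec(A/I)$ is again a good moduli space, so uniqueness \cite[Theorem~6.6]{MR3237451} gives $Z\times_{\spec(A)}\spec(A/I)=Z_0$. Since $\Z\to\spec(A)$ is of finite type, $Z\to\spec(A)$ is of finite type by \cite[Theorem~6.3.3]{MR3272912}; if one wishes to respect the noetherian hypotheses of that reference, one first reduces to a noetherian, even finite-type, base by the usual limit argument, using Lemma~\ref{lem:sista}. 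Finally $Z\times_{\spec(A)}\spec(A/I)=Z_0\to\spec(A/I)$ is proper, and $\spec(A/I)\hookrightarrow\spec(A)$ is a nilpotent thickening; since separatedness and universal closedness are topological conditions, a finite-type morphism of algebraic spaces is proper as soon as its restriction to the reduction of the base is. Hence $Z\to\spec(A)$ is proper, and $\Z$ defines an element of $\GoodHilb_\X(\spec(A))$.

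This element maps to the given object of $\morj_\X$, and restricts over $\spec(A/I)$ to the given object of $\GoodHilb_\X$ because $\Z\times_{\spec(A)}\spec(A/I)=\Z_0$; it is therefore the desired lift, and by the remark above it is the only one, which completes the proof. The steps are individually routine; the point requiring the most care is the third paragraph, namely transporting all three conditions --- closed immersion, flat and finitely presented, proper good moduli space --- from $\Z_0$ to $\Z$. This is exactly where one combines the compatibility of good moduli spaces with base change and their uniqueness with the insensitivity of the properties \emph{closed immersion} and \emph{proper} to nilpotent thickenings of the base, and the reduction to a noetherian base needed to invoke finiteness of $Z\to\spec(A)$ is the one mildly annoying technicality.
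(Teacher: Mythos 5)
Your proposal is correct and follows essentially the same route as the paper: deduce that $\Z\to\X_A$ is a closed immersion because this is detected on nilpotent thickenings (the paper cites \cite[Tag~09ZW]{stacks-project} where you unwind the Nakayama-type argument by hand), then obtain the good moduli space $\Z\to Z$ from \cite[Lemma~4.14]{MR3237451} together with its compatibility with base change, and finally transfer properness of $Z_0\to\spec(A/I)$ to $Z\to\spec(A)$ across the thickening (the paper cites \cite[Tag~09ZZ]{stacks-project}). Your extra remarks on uniqueness of the lift and on finite presentation of $\Z\to\spec(A)$ are correct and left implicit in the paper; the only loose phrase is calling universal closedness ``topological'' (it quantifies over base changes, but is indeed insensitive to thickenings, which is all you use), and the detour through noetherian approximation for the finite-type claim is avoidable since finite type also transfers directly along thickenings.
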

\begin{proof}
Given a commutative diagram as above, we get a commutative diagram
\[\xymatrix@R=0.5em@C=-0.1em{
&					\X_{A/I}\ar[rr]\ar[dd]&&			\X_A\ar[dd]\\
\Z_1\ar[rr]\ar[dd]\ar[ur]&		&			\Z_2\ar[dd]\ar[ur]\\
&X_{A/I}\ar[rr]\ar[dddl]&&X_A\ar[dddl]\\
Z_1\ar[dd]\ar[ur]\ar[rr]&		&	Z_2\ar[dd]\ar[ur]		\\\\
\spec(A/I)\ar[rr]&		&			\spec(A)}\]
where the top, front, back, and bottom squares are all cartesian. 
Here we have written $Z_1=\phi_{A/I}(\Z_1)$ and $Z_2=\phi_A(\Z_2)$. By definition $\Z_1\to\X_{A/I}$ is a closed embedding, $\Z_1\to Z_1$ a good moduli space, $Z_1\to\spec(A/I)$ is proper, and both $\Z_1\to\spec(A/I)$ and $\Z_2\to\spec(A)$ are flat and locally finitely presented. We need to show that $\Z_2\to\X_A$ is a closed embedding and that $\Z_2\to Z_2$ is a good moduli space such that $Z_2\to\spec(A)$ is proper. 

That $\Z_2\to\X_A$ is a closed embedding is a local property on the target so we can assume that $\X_A$ is a scheme and apply  \cite[Tag~09ZW]{stacks-project} on the top square from which the result follows. 
Given that $\Z_2\to\X_A$ is a closed embedding it then follows that $\Z_2\to Z_2$ is a good moduli space. 
By considering the lower front cartesian square it now follows that $Z_2\to \spec(A)$ is proper \cite[Tag~09ZZ]{stacks-project}.
%
\end{proof}

\begin{lemma}\label{lem:theoroes}
Suppose that $\X$ is of finite type over a field, and that $\X$ has affine diagonal. Then the good Hilbert functor $\GoodHilb_\X$ has coherent automorphism, deformation, and obstruction theories. 
\end{lemma}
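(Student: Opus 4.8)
The plan is to reduce the statement to the general machinery recalled in Section~\ref{sec:artin}, specifically Proposition~\ref{prop:cohoftheo}, by using the fact that $\GoodHilb_\X\to\morj_\X$ is formally \'etale (Lemma~\ref{lem:fixarallt}). By Proposition~\ref{prop:cohoftheo}, for an object $(T,g\colon\Z\to\X_T)$ the automorphisms, deformations, and a $2$-step obstruction theory of $\morj_\X$ — and hence, by formal \'etaleness, of $\GoodHilb_\X$ — at this object are computed by functors of the form $\ext^n_{\O_\Z}(\fF,g^\ast f_T^\ast(-))\colon\QCoh(T)\to\mathbf{Ab}$ for $n=0,1,2$, where $\fF$ is a bounded complex with coherent cohomology. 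So the task is reduced to: \emph{show that each functor $M\mapsto\ext^n_{\O_\Z}(\fF,g^\ast f_T^\ast M)$ is coherent}, in the sense of Artin (commutes with filtered colimits and is "half-exact with finite presentation" in the appropriate sense).

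The key geometric input is that $T$ is affine of finite type over $k$, $\Z\to T$ is flat and finitely presented, and — crucially for our functor, as opposed to the classical Hilbert case — the \emph{good moduli space} $Z=\phi_T(\Z)$ is proper over $T$. First I would invoke that $\X$ has affine diagonal and is of finite type over $k$, so $\Z$ (being a closed substack of $\X_T$) also has affine diagonal and the good moduli space morphism $\pi\colon\Z\to Z$ is cohomologically affine. The standard reduction is: since $\Z$ has the resolution property locally, resolve $\fF$ by locally free sheaves, so that $\ext^n_{\O_\Z}(\fF,g^\ast f_T^\ast M)$ is computed by a complex $R\Gamma(\Z,\fG^\bullet\otimes g^\ast f_T^\ast M)$ for $\fG^\bullet$ a bounded complex of coherent sheaves; then push forward through $\pi$ and use that $\pi_\ast$ is exact on quasi-coherent sheaves and the projection formula (item (6) of Section~\ref{sec:gms}) to rewrite $R\Gamma(\Z,-)=R\Gamma(Z,\pi_\ast(-))$ and reduce cohomology to the \emph{proper} algebraic space $Z$ over the affine $T$. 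Coherence of such functors on proper morphisms of noetherian schemes/algebraic spaces is the classical content of Artin's coherence results (the functors $M\mapsto H^n(Z,\mathcal{G}\otimes_{\O_T}M)$ for $Z\to T$ proper and $\mathcal{G}$ coherent are coherent); this is exactly the kind of statement for which coherence was designed.

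In terms of ordering: (i) use Lemma~\ref{lem:fixarallt} plus Proposition~\ref{prop:cohoftheo} to reduce to coherence of the $\ext$-functors; (ii) reduce, via a locally free resolution of $\fF$ on $\Z$ and the projective-formula/exactness properties of $\pi=\phi_T|_\Z$ from Section~\ref{sec:gms}, to coherence of cohomology functors on the proper algebraic space $Z/T$; (iii) conclude by the classical coherence statement for proper morphisms, e.g.\ as in Hall's framework \cite{2012arXiv1206.4182H} or Artin's original paper. One should check along the way that $\pi_\ast\fG^\bullet$ really has coherent cohomology — this is item (5) of Section~\ref{sec:gms} — and that the whole construction is compatible with the base change by $M$, which is where the projection formula (item (6)) does the real work.

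The main obstacle I anticipate is step (ii): making precise that the relevant derived pushforward through the good moduli space $\pi\colon\Z\to Z$ behaves well enough with respect to tensoring by $g^\ast f_T^\ast M$ for arbitrary quasi-coherent $M$ on $T$. Because $\pi_\ast$ is only guaranteed to be exact (not to commute with all limits/colimits a priori beyond what exactness and coherence give) and because $\fG^\bullet$ involves a genuinely unbounded-below locally free resolution that must be truncated using the finite cohomological dimension coming from $\X$ having affine diagonal and the resolution property, some care is needed to see that $R\pi_\ast$ of each term is a bounded complex of coherent sheaves and that $R\pi_\ast(\fG^\bullet\otimes g^\ast f_T^\ast M)=\pi_\ast(\fG^\bullet)\otimes f_T^\ast M$ (Lemma~\ref{lem:alg}-style projection-formula argument applied term-by-term). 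Once that identification is in hand, the properness of $Z\to T$ and standard coherence of cohomology on proper morphisms finish the proof; most of the remaining work is bookkeeping that I would not grind through here.
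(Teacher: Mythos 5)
Your step (i) is exactly the paper's first move: Lemma~\ref{lem:fixarallt} together with Proposition~\ref{prop:cohoftheo} reduces everything to the coherence of the functors $\ext^n_{\O_\Z}\bigl(\fF,g^\ast f_T^\ast(-)\bigr)$. From there, however, the paper takes a completely different route from yours: it observes that, since $\X$ admits a good moduli space, every closed point of $\X$ has linearly reductive stabilizer, hence $\operatorname{D}_{\QCoh}(\X)$ is compactly generated, and then it invokes a general coherence theorem of Hall that applies directly on the stack $\Z$ --- the good moduli space $Z$ and its properness over $T$ play no role in the paper's argument at this stage. Your alternative --- resolve $\fF$ by locally free sheaves, push down along the cohomologically affine $\pi\colon\Z\to Z$ using exactness of $\pi_\ast$ and the projection formula, and conclude by coherence of hypercohomology functors for the proper morphism $Z\to T$ --- is very much in the spirit of the formal GAGA sections of this paper, and the bookkeeping you defer (that $\pi_\ast$ of the dual resolution stays coherent and $T$-flat, so the underived twist by $h^\ast M$ computes the derived one, and that the unbounded-above dual complex may be truncated) can indeed be carried out.

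The genuine gap is at the start of your step (ii): you need a \emph{global} bounded-above locally free resolution of $\fF$ on $\Z$, and ``$\Z$ has the resolution property locally'' does not provide one --- every algebraic stack has the resolution property smooth-locally, and local resolutions do not glue, so this phrase carries no content. The hypotheses of the lemma (finite type over a field, affine diagonal) are not known to imply the resolution property; the remark following Theorem~\ref{thm:main} makes explicit that ``finite type over a field'' and ``resolution property'' are two \emph{different} sufficient hypotheses for this lemma, and the paper's proof is built to exploit the former while yours requires the latter. Consequently your argument proves the resolution-property variant of the lemma (which does suffice for the application in Theorem~\ref{thm:main}, since there $\X$ is assumed to have the resolution property and this is inherited by the closed substack $\Z\subseteq\X_T$), but it is not a proof of the lemma as stated. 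A smaller caveat: the ``classical'' coherence input you need at the end is for hyper-Ext of bounded coherent complexes along a proper morphism of noetherian algebraic spaces; this is Hall's (or Flenner's) extension of EGA~III rather than anything in Artin's original paper, so the citation should be adjusted accordingly.
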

\begin{proof}
As the inclusion $\GoodHilb_\X\to \morj_\X$ is formally \'etale by Lemma~\ref{lem:fixarallt}, it follows from Proposition~\ref{prop:cohoftheo} that the automorphisms, deformations, and obstructions of an object ${(\Z\to\X_T)\in\GoodHilb_\X(T)}$ are of the form $\ext_{\O_\Z}^n\!\bigl(\fF,g^\ast f_T^\ast(-)\bigr)$ for certain bounded complexes~$\fF$ with coherent cohomology. By \cite[Proposition~12.14]{MR3237451}, any closed point of $\X$ has linearly reductive stabilizer, so we can apply \cite[Theorem~2.26]{2015arXiv150406467A} which says that the derived category $\operatorname{D}_{\QCoh}(\X)$ is compactly generated. Thus, the assumptions of \cite[Corollary~4.16]{2014arXiv1405.1887H} are satisfied, which states that the functors $\ext_{\O_\Z}^n\!\bigl(\fF,g^\ast f_T^\ast(-)\bigr)$ are indeed coherent.
\end{proof}


{
\renewcommand{\thethmx}{\ref{thm:main}}
\begin{thmx}
Let $S$ be a scheme of finite type over a field $k$, and let $\X$ be an algebraic stack of finite type over $S$. Suppose that $\X$ has affine diagonal, has the resolution property, and admits a good moduli space $\X\to X$, such that $X\to S$ is separated. Then, the functor $\GoodHilb_\X$ is an algebraic space that is locally of finite presentation over $S$.
\end{thmx}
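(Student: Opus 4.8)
The plan is to verify the six conditions of Artin's criterion in the form of Corollary~\ref{cor:artinspaces}, since $S$ is of finite type over a field and hence excellent (Remark~\ref{rem:exc}). Most of the work has already been distributed among the preceding lemmas, so the proof is largely an act of assembly. First I would note that $\GoodHilb_\X$ is a sheaf in the \'etale topology: this is immediate from Lemma~\ref{lem:sheaf}, which gives the stronger statement that it is an fpqc sheaf. This handles condition~1. For condition~2, limit preservation is exactly the content of Lemma~\ref{lem:sista}. For condition~3, homogeneity, I would invoke Proposition~\ref{prop:bootstrap}: the fibered category $\morj_\X$ satisfies homogeneity, and Lemma~\ref{lem:fixarallt} shows that the inclusion $\GoodHilb_\X\to\morj_\X$ is formally \'etale, so $\GoodHilb_\X$ inherits homogeneity.

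Next, conditions 5 and 6 — coherence of the automorphism, deformation, and obstruction functors — are precisely the statement of Lemma~\ref{lem:theoroes}, which applies because $\X$ is of finite type over a field and has affine diagonal; Proposition~\ref{prop:cohoftheo} together with the formal \'etaleness of Lemma~\ref{lem:fixarallt} identifies these functors with $\ext$-functors of the expected shape, and Lemma~\ref{lem:theoroes} shows they are coherent. (For an algebraic space one only needs the deformation and obstruction theories, the automorphism functor being trivial, but having all three costs nothing.) So the only condition not already packaged by an earlier lemma is condition~4, effectivity. This is where formal GAGA enters and is the main obstacle.

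For effectivity, let $(B,\m)$ be a complete local noetherian ring with $\spec(B)\to S$ such that $\spec(B/\m)\to S$ is of finite type; I must show that $\GoodHilb_\X(\spec B)\to\varprojlim_n\GoodHilb_\X(\spec B/\m^n)$ is a bijection. An element of the target is a compatible system of closed substacks $\Z_n\subseteq\X_{B/\m^{n+1}}$, each flat and finitely presented over $\spec(B/\m^{n+1})$ with proper good moduli space $Z_n\to\spec(B/\m^{n+1})$. Such a system corresponds to a quotient $\O_{\widehat{\X}_B}\twoheadrightarrow\O_{\widehat{\Z}}$, i.e.\ to a coherent sheaf $\ffF$ on the completion $\widehat{\X_B}$ arising as the structure sheaf of a formal closed substack whose good supports are proper over $\spec(B)$; in particular $\ffF\in\Cohpgs(\widehat{\X_B})$. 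Since $\X_B\to\spec(B)$ is of finite type, $\X_B$ is noetherian, inherits the resolution property and affine diagonal, and admits the good moduli space $X_B\to\spec(B)$ which is separated and of finite type; so Theorem~\ref{thm:cohpgs} applies and gives that $\Cohpgs(\X_B)\to\Cohpgs(\widehat{\X_B})$ is an equivalence. Hence the formal ideal sheaf algebraizes to a genuine coherent ideal $\mathcal{I}\subseteq\O_{\X_B}$, defining a closed substack $\Z\subseteq\X_B$ with $\widehat{\Z}\cong\varprojlim\Z_n$. It then remains to check that this $\Z$ is actually an object of $\GoodHilb_\X(\spec B)$, namely that $\Z\to\spec(B)$ is flat and finitely presented and that its good moduli space $Z\to\spec(B)$ is proper; flatness and finite presentation follow from the local criterion for flatness applied along the completion together with the fact that flatness over $\spec(B)$ can be checked after completion, and properness of $Z$ follows because $Z$ is separated and of finite type over $\spec(B)$ (being a closed subspace of $X_B$) and its completion $\widehat{Z}=\varprojlim Z_n$ is proper over $\spec(B)$, so $Z\to\spec(B)$ is proper by the formal properness criterion. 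Full faithfulness of the completion functor in Theorem~\ref{thm:cohpgs} gives both that the algebraization is unique and that the map is injective, completing the verification of condition~4.

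Having checked all six conditions, Corollary~\ref{cor:artinspaces} yields that $\GoodHilb_\X$ is an algebraic space locally of finite presentation over $S$, as claimed. I expect the genuinely delicate point to be effectivity: one must correctly interpret a compatible system of closed substacks with proper good moduli spaces as an object of $\Cohpgs(\widehat{\X_B})$ so that Theorem~\ref{thm:cohpgs} is applicable, and then argue that the algebraized coherent sheaf is not merely a closed substack but one lying in $\GoodHilb_\X$ — i.e.\ transporting the flatness and the properness of the good moduli space back from the formal to the algebraic setting. Everything else is bookkeeping across the preceding lemmas.
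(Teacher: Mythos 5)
Your proposal is correct and follows essentially the same route as the paper: the paper's proof is exactly the assembly of Lemmas~\ref{lem:sheaf}, \ref{lem:sista}, \ref{lem:fixarallt} (with Proposition~\ref{prop:bootstrap}), and \ref{lem:theoroes}, with effectivity handled by Theorem~\ref{thm:cohpgs} ``applied to quotients of the structure sheaf.'' Your expansion of the effectivity step is consistent with this; note only that the properness of $Z\to\spec(B)$ comes most directly from the fact that the equivalence of Theorem~\ref{thm:cohpgs} lands in $\Cohpgs(\X_B)$, whose objects have proper good support by definition, rather than from a separate formal properness criterion.
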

}

\begin{proof} We summarize our previous results that implies that $\GoodHilb_\X$ satisfies Artin's criterion in the form of Corollary~\ref{cor:artinspaces}. 
\begin{enumerate}
\item As the fpqc topology is finer than the \'etale topology, this follows by Lemma~\ref{lem:sheaf}.
\item This is Lemma~\ref{lem:sista}. 
\item Follows by combining Lemma~\ref{lem:fixarallt} with Proposition~\ref{prop:bootstrap}. 
\item This is Theorem~\ref{thm:cohpgs} applied to quotients of the structure sheaf. 
\item This is Lemma~\ref{lem:theoroes}.
\item This is Lemma~\ref{lem:theoroes}.
\qedhere\end{enumerate}
\end{proof}

\begin{remark}
Jack Hall has pointed out that the conclusion of Lemma~\ref{lem:theoroes} remains true when replacing the assumption of $\X$ being of finite type over a field with $\X$ having the resolution property. Moreover, a forthcoming result of Jarod Alper, Jack Hall and David Rydh will show that the conclusion of Lemma~\ref{lem:theoroes} is true without either the assumptions of being over a field or having the resolution property. Thus, Theorem~\ref{thm:main}  can be strengthened by replacing the base scheme of finite type over a field with a general excellent scheme.
\end{remark}


\section{Invariant Hilbert schemes}\label{sec:invHilb}
In this section, we will apply our results on the good Hilbert functor to describe the invariant Hilbert scheme described in \cite{MR3184162}. Our results can however be presented in a  slightly more general setting. All schemes we consider will be noetherian over a field $k$. 

Let $S$ be a scheme of finite type over $k$, and consider a flat affine group scheme $G$ of finite type over $S$. Let $G$ act on an affine scheme $U=\spec(A)$ of finite type over $S$. Then the quotient stack $[U/G]$ is noetherian and of finite type over $S$. Furthermore, a closed subscheme $V$ of $U$ that is invariant under the action of $G$ is equivalent to a closed substack $\Z$ of $[U/G]$. In fact, $\Z=[V/G]$ and $V=\Z\times_{[U/G]}U$. Moreover, $V\to S$ is flat if and only if $[V/G]\to S$ is flat. Thus, parametrizing flat closed invariant subschemes of $U$ is equivalent to parametrizing flat closed substacks of $[U/G]$. 

We call $G$ linearly reductive if the structure morphism $BG\to S$ of the classifying stack of $G$ is a good moduli space \cite[Definition~12.1]{MR3237451}. If $G$ is linearly reductive then we have, by \cite[Theorem~13.2]{MR3237451}, that 
\[\phi\colon [U/G]\to U\git G:=\spec(\pi_\ast\O_{[U/G]})=\spec(A^G)\] 
is a good moduli space, where $\pi\colon[U/G]\to S$ denotes the structure morphism. 
Motivated~by these results, we call the good Hilbert functor $\GoodHilb_{[U/G]}$ the \emph{$G$-invariant Hilbert functor} of~$U$.

\begin{proposition}
With the assumptions above, the $G$-invariant Hilbert functor of $U$ is algebraic. 
\end{proposition}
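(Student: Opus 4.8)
The plan is to deduce this proposition directly from Theorem~\ref{thm:main} by verifying that the quotient stack $\X = [U/G]$ satisfies all the hypotheses of that theorem, so essentially all the work is checking boxes. First I would observe that $S$ is of finite type over a field $k$ by assumption, and that $[U/G]$ is an algebraic stack of finite type over $S$ — this is standard since $G$ is a flat affine group scheme of finite type over $S$ acting on the affine finite-type $S$-scheme $U$, and it is already noted in the text preceding the statement. So the two base-level hypotheses of Theorem~\ref{thm:main} are immediate.

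Next I would check the three structural conditions on $\X=[U/G]$. \emph{Affine diagonal:} the diagonal of $[U/G]$ is $U\times_{[U/G]}U \to U\times_S U$, which is isomorphic to $G\times_S U \to U\times_S U$ (the action-and-projection map); since $G\to S$ is affine, this morphism is affine, hence $[U/G]$ has affine diagonal. \emph{Resolution property:} for $\X=[U/G]$ with $U$ affine and $G$ linearly reductive (equivalently, for a quotient of an affine scheme by a linearly reductive group), quasi-coherent sheaves are $G$-equivariant $A$-modules, and one gets surjections from equivariant free modules, i.e.\ from locally free sheaves on $\X$; this is the standard fact that $[\spec A/G]$ has the resolution property when $G$ is (nice enough) affine — I would cite this via the setup already invoked from \cite{MR3237451}. \emph{Good moduli space:} this is exactly what the paragraph before the statement records — since $G$ is linearly reductive, $\phi\colon[U/G]\to U\git G=\spec(A^G)$ is a good moduli space by \cite[Theorem~13.2]{MR3237451}. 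Finally I need $U\git G\to S$ to be \emph{separated}: since $A^G$ is an $\O_S$-algebra, $\spec(A^G)\to S$ is affine, hence separated.

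Having verified all hypotheses, Theorem~\ref{thm:main} applies verbatim to $\X=[U/G]$ and yields that $\GoodHilb_{[U/G]}$ is an algebraic space locally of finite presentation over $S$; since by definition the $G$-invariant Hilbert functor of $U$ \emph{is} $\GoodHilb_{[U/G]}$, this is exactly the claim.

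The only genuine point requiring care — the "main obstacle", such as it is — is the resolution property for $[U/G]$, because the general theorem of \cite{MR2108211} does not assert it unconditionally; one must use that $U$ is affine and $G$ is linearly reductive (more precisely that the geometric stabilizers are linearly reductive and $\X$ has a suitable affine presentation), so that every coherent sheaf, being a finitely generated equivariant module over the affine $A$, receives a surjection from a finite direct sum of summands of the regular representation, which is locally free on the stack. Everything else (affine diagonal, separatedness of the good moduli space over $S$, finite-typeness) is a one-line verification, so the proof will be short: restate the hypotheses of Theorem~\ref{thm:main}, check each, and conclude.
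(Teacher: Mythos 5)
Your proposal is correct and follows essentially the same route as the paper: verify the hypotheses of Theorem~\ref{thm:main} for $\X=[U/G]$ (affine diagonal by descent from $G\times_SU\to U\times_SU$, the resolution property, the good moduli space $[U/G]\to\spec(A^G)$ with $\spec(A^G)\to S$ affine hence separated) and conclude. The only divergence is cosmetic: where you argue the resolution property by hand via equivariant modules and the regular representation, the paper simply cites \cite[Theorem~2.1]{MR2108211}, and note that linear reductivity is not actually needed for this step --- local finiteness of representations of a flat affine group scheme already suffices.
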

\begin{proof}
We show that $[U/G]$ satisfies the assumptions of Theorem~\ref{thm:main}. There is a cartesian square
\[\xymatrix{G\times_SU\ar[r]\ar[d]&U\times_SU\ar[d]\\[U/G]\ar[r]&[U/G]\times_S[U/G]}\]
where the vertical maps are faithfully flat and of finite presentation. Since $G$ and $U$ are affine we have that $G\times_SU\to U\times_SU$ is affine. It therefore follows by descent that the diagonal $[U/G]\to[U/G]\times_S[U/G]$ is affine. 
Moreover, $[U/G]$ has the resolution property, see e.g.\ \cite[Theorem~2.1]{MR2108211}. As ${U\git G}\to S$ is affine, it is separated. Thus, Theorem~\ref{thm:main} states that $\GoodHilb_{[U/G]}$ is algebraic.
\end{proof}

From now on, we assume that $S=\spec(k)$. Then $G$ is linearly reductive if and only if any \mbox{finite} dimensional representation of $G$ splits as a direct sum of irreducible representations \cite[Proposition~12.6]{MR3237451}. 
For any $S$-scheme $T$ we can consider a closed substack $[V/G_T]$ of ${[U_T/G_T]=[U/G]\times_ST}$. Letting $\pi$ denote the composition ${[V/G_T]\hookrightarrow [U_T/G_T]\to T}$, we have that 
\[[V/G_T]\to{V\git G_T}:=\spec(\pi_\ast\O_{[V/G_T]})\] is a good moduli space. If $[V/G_T]\in\GoodHilb_{[U/G]}(T)$, then $\pi\colon[V/G_T]\to T$ is flat and finitely presented, and ${V\git G_T}\to T$ is proper. 

Let us study the condition that ${V\git G_T}\to T$ is proper. As $[V/G_T]\to T$ is of finite type by assumption, it follows from \cite[Theorem~6.3.3]{MR3272912} that ${V\git G_T}\to T$ is of finite type. Since ${V\git G_T}\to T$ is affine it is also separated. Thus, the only non-automatic part of this condition is that ${V\git G_T}\to T$ is universally closed. That an affine morphism is universally closed is equivalent to it being integral, which is also equivalent to $\pi_\ast\O_{[V/G]}$ being a finitely generated $\O_T$-module. The condition that $[V/G_T]\to T$ is flat implies that also ${V\git G_T}\to T$ is flat \cite[Theorem~4.16(ix)]{MR3237451}. Thus, we conclude that the ${V\git G_T}\to T$ is proper if and only if $\pi_\ast\O_{[V/G]}$ is a locally free $\O_T$-module of finite rank.

We now study the condition that $\pi\colon[V/G_T]\to T$ is flat. Let $p\colon V\to T$ denote the composition ${V\hookrightarrow U_T\to T}$. Then, $\pi_\ast\O_{[V/G_T]}=(p_\ast\O_V)^G$ is the ring of invariants of $p_\ast\O_V$. Also, the morphism $\pi\colon[V/G_T]\to T$ is flat if and only if $p\colon V\to T$ is flat.  Let $\operatorname{Irr}(G)$ denote the set of isomorphism classes of irreducible representations of $G$. Using the ideas of \cite{MR3184162}, we then have that $p\colon V\to T$ is flat if and only if each sheaf of covariants $\fF_M:=(M^\ast\otimes_{k}p_\ast\O_{V})^G$ is a locally free $\O_T$-module of finite rank for any $M\in\operatorname{Irr}(G)$. 

Given a function $h\colon\operatorname{Irr}(G)\to\N$ we let $\GoodHilbh_{[U/G]}$ denote the functor parametrizing flat closed substacks with Hilbert function~$h$. That is, for any $S$-scheme $T$, we let
\[\GoodHilbh_{[U/G]}(T)=\left\{[V/G_T]\in\GoodHilb_{[U/G]}(T)\,\left\vert\,\begin{aligned}& \text{$\fF_M$ locally free $\O_T$-module of}\\&\text{rank $h(M)$ for all $M\in\operatorname{Irr}(G)$}\end{aligned}\right.\right\}.\]
%
We saw above that an element $[V/G_T]\in\GoodHilb_{[U/G]}(T)$ required that all the corresponding sheaves of covariants were locally free of some finite ranks, so it follows that
\[\GoodHilb_{[U/G]}=\coprod_h\GoodHilbh_{[U/G]},\]
where the disjoint union is taken over all possible functions $h\colon\operatorname{Irr}(G)\to\N$. 
The functor  $\GoodHilbh_{[U/G]}$ is precisely the functor studied in \cite{MR3184162} in the setting of invariant closed subschemes of $U$, and there it was shown that this functor is representable by a scheme called the invariant Hilbert scheme. 
%
\begin{ex}
Suppose that $G=\spec(k[L])$ is diagonalizable, where $L$ is an abelian group, and let $U=\spec(A)$. As is explained in \cite[Example~2.2]{MR3184162}, the irreducible representations of $G$ are in a one-to-one correspondence with the elements of $L$, and the action of $G$ on $U$ is equivalent with a grading $A=\bigoplus_{a\in L}A_a$. For any function $h\colon L\to\N$, we then have that $\GoodHilbh_{[U/G]}$ is the multigraded Hilbert scheme of \cite{MR2073194}. 
\end{ex}

The results of this paper do not give any new results concerning these fundamental objects, but the general framework we have constructed gives possibilities of natural generalizations that we leave for future work. 



\bibliography{references}{}
\bibliographystyle{amsalpha}

\end{document}